\renewcommand{\theequation}{\arabic{section}.\arabic{equation}}
\begin{document}

\title{Navier-Stokes equations in the whole space with an eddy viscosity}

\author{ Roger Lewandowski\thanks{IRMAR, UMR 6625, Universit\'e Rennes 1,
   Campus Beaulieu, 35042 Rennes cedex FRANCE;
   Roger.Lewandowski@univ-rennes1.fr,
   http://perso.univ-rennes1.fr/roger.lewandowski/}} \date{}
\maketitle

\begin{abstract} We study the Navier-Stokes equations with an extra eddy viscosity term in the whole space $\R^3$. We introduce a suitable regularized system  for which we prove the existence of a regular solution defined for all time. 
We prove that when the regularizing parameter goes to zero, the solution of the regularized system converges to a turbulent solution of  the initial system. 
\end{abstract}
MCS Classification: 35Q30, 35D30, 76D03, 76D05.
\smallskip

Key-words: Navier-Stokes equations, eddy viscosities, turbulent solutions. 

\begin{center} {\sl In memory of Jean Leray} \end{center} 

\section{Introduction}

Let us consider the incompressible Navier-Stokes Equations (NSE) in the whole space $\R^3$ with an extra eddy viscosity term:
\BEQ \label{eq:NSE} \left \{ \begin{array}{ll} 
\p_t \uv + \uv \cdot \g \uv - \nu \Delta \uv - \div( A \g \uv) + \g p = 0, & \hskip 3cm\hbox{(i)} \\ 
\div \, \uv = 0,   \phantom{\int_0^1} & \hskip 3cm \hbox{(ii)} 
\end{array} \right. 
\EEQ
where $\uv = \uv (t, \x) = (u_1(t, \x), u_2(t, \x), u_3(t, \x))$ denotes the velocity, $p = p(t, \x)$ the pressure, $t \ge 0$, $\x \in \R^3$, $\nu>0$ is the kinetic viscosity and $A = A(t, \x)$ is an eddy viscosity. 

This PDE system arises from turbulence modeling, the purpose of which is the calculation  of averaged or filtered  fields associated to a given turbulent flow. 
Eddy viscosities  are usually introduced to model the Reynolds stress of such flows, according to the Boussinesq assumption (see for instance \cite{CL14, SP00}).

This system was already studied in the case of bounded domains with various boundary conditions (see in \cite{CL14}, chapters 6 to 8 for a comprehensive presentation). However, so far we know, it has never been investigated before in the case of the whole space, which motivates the present study.

We prove in this paper the existence of a turbulent solution to the NSE (\ref{eq:NSE}), global in time,  through a suitable variational formulation on
the basis of the assumptions: 
\begin{enumerate}[i)]
\item $\uv_0 \in L^2(\R^3)^3$, $\div \, \uv_0 = 0$, 
\item $A \ge 0$, 
$A \in C_b(\R_+, W^{1, \infty}(\R^3))$ and is of compact support in $\R^3$ uniformly in time. 
\end{enumerate}
One of the key features  of this solution is that it satisfies  an energy inequality. The notion of turbulent solution was initially introduced by J. Leray \cite{Ler1934} when $A=0$, what makes our result a generalization of Leray's result. However, one part of Leray's program does not directly apply to the case $A \not=0$, since the eddy viscosity term brings unexpected issues. Things must be reconsidered, which motivates our developments that do not appear in 
Leray's paper (see  subsection \ref{sec:additional_comments}  for further explanations).

The turbulent solution is constructed as a limit of regular solutions when $\E \to 0$ of   the regularized NSE,
\BEQ \label{eq:NSE_eps_intro} \left \{ \begin{array}{l} 
\p_t \uv + \um \cdot \g \uv - \nu \Delta \uv - \overline{ \div \, ( A \g \um)} + \g p = 0,  \\
\div \, \uv = 0, \phantom{\int }\\
\uv_{t=0} = \overline {\uv_0},
\end{array} \right. 
\EEQ
 where $\overline \psi = \rho_\E \star \psi$ for a given mollifier $\rho$, and $\rho_\E = \E^{-3} \rho(\x / \E)$. The regularized convection term $\um \cdot \g \uv$ was initially introduced by  J. Leray. We introduce the regularized eddy viscosity term
$-\overline{ \div \, ( A \g \um)}$ in order to preserve the dissipation feature of the eddy viscosity. 
 
 A large part of the paper is devoted to study the regularized NSE   (\ref{eq:NSE_eps_intro}). The Oseen representation  formula \cite{CO11, CWO27} combined with a fixed point procedure yields the existence of a unique regular solution, global in time,  which means 
 a solution of class $C^\infty$  in time and space defined for $t \in [0, \infty[$, with no singularity, the  $H^m$  norms of which are driven by  the $L^2$ norm of $\uv_0$, 
 $\E$, the shape of $\rho$ and its derivatives. This solution satisfies the energy balance, which provides valuable
estimates that do not depend on $\E$. 

We then show that the solution of the regularized NSE (\ref{eq:NSE_eps_intro}) converges to a turbulent solution of the NSE (\ref{eq:NSE}) when $\E \to 0$. The proof  makes
use of an estimate of the solution of  (\ref{eq:NSE_eps_intro}) for large values of $| \x |$,   uniform in $\E$, which allows to apply standard compactness arguments on bounded domains. The assumption ``$A$ is of compact support uniformly in $t$" plays a role at this stage, and it is likely that it
could be replaced by a suitable decay assumption of $A$ for large values of $| \x |$. 

The paper is organized as follows. We first define the notions of regular and turbulent solutions. Afterwards, we focus on the approximated system (\ref{eq:NSE_eps_intro}). In particular, we get {\sl \`a priori} estimates of all $H^m$ norms of the velocity  for any {\sl \`a priori } regular solutions, estimates derived from the Oseen potential and  non linear Volterra equations. Then we prove the existence of  a unique regular solution. Afterwards, we pass to the limit in the equations to construct the turbulent solution. 

We conclude the paper by a series of remarks and additional open problems. We explain why Leray's frame cannot be reproduced turnkey when $A \not=0$. We also make natural connections between the present work  and models such as 
Bardina (see Layton-Lewandowski \cite{LL03, LL06},  Cao-Lunasin-Titi \cite{CLT06}) and Leray-$\alpha$ (see Cheskidov-Holm-Olson-Titi \cite{CHOTI05}). 

Finally, The appendice \ref{app:ossen} aims to prove the basic estimates about the Oseen's tensor. The appendice \ref{ap:appen_volt} is devoted to the non linear Volterra equations and what we have called the 
V-maximum principle, intensively  used to get estimates.  
\medskip

{\sl Acknowledgement}. I thank my colleagues Christophe Cheverry and Taoufik Hmidi of the Institute of Mathematical Reasearch of Rennes (IRMAR) for many 
stimulating discussions about the Navier-Stokes equations. 

I am gratefull to Paul Alphonse and Adrien Laurent, students at the Ecole Normale Sup\'erieure de Rennes, who have attended my master course about the Navier-Stokes Equations  at  the University of Rennes 1, during the academic year 
2016-2017. Their remarks and comments have been very useful to improve my course, and they have entirely written the Appendice \ref{app:ossen} included in this paper. 

Finally, I warmly thank Luc Tartar  for the very smart proof of the V-maximum principle reported in Appendice \ref{ap:appen_volt}, and more generally for many valuable discussions on Leray's 1934 paper and  fluid mechanics over the past years.

\section{Regular and turbulent solutions} 

\subsection{Regular solutions} 
Let $\alpha = (\alpha_1, \alpha_2, \alpha_3) \in \N^3$, $| \alpha | = \alpha_1 + \alpha_2 + \alpha_3$, 
$$ D^\alpha \uv = (D^\alpha u_1, D^\alpha u_2, D^\alpha u_3), \quad D^\alpha u_i = { \p ^{| \alpha | }u_i \over \p x_1 ^{\alpha_1} \p x_2 ^{\alpha_2 } \p x_3 ^{\alpha_3} }.$$
For any given $m \in \N$, when we write $D^m \uv$ we assume that $D^\alpha \uv$ is well defined whatever $\alpha$ such that $| \alpha | = m$, and in practical calculations 
$$ | D^m \uv | = \sup_{ | \alpha | = m } | D^\alpha \uv |. $$
The standard Sobolev space $W^{m,p}(\R^3)$ is equipped with the norm 
$$ || w ||_{m,p} = \sum_{j=0}^m || D^j w ||_{L^p(\R^3)},$$
$H^m(\R^3) = W^{m,2}(\R^3)$.
 In this section, we assume temporarily that $\uv_0 \in C^1(\R^3) \cap H^1(\R^3) \cap L^\infty(\R^3)$ and satisfies $\div \,  \uv_0 = 0$, and $A \in C(\R_+, C^1(\R))$.  

\begin{definition}\label{def:regular}  We say that $(\uv, p)$ is a regular solution of the NSE (\ref{eq:NSE}) over the time interval $ [0, T[$,   if 
\begin{enumerate}[i)]
\item $\uv, \p_t \uv, \g \uv, D^2 \uv, p, \g p$ are well defined and continuous in $t$ and $x$ for $(t, \x) \in  \, ]0, T[ \times \R^3$, and satisfy the relations (\ref{eq:NSE}.i) and (\ref{eq:NSE}.ii) in 
$\R^3$ at all $t \in \, ]0, T[$.  
\item $\forall \, \tau < T$, $\uv \in L^\infty([0, \tau], L^2(\R^3)^3) \cap L^\infty([0, \tau] \times \R^3)^3$, 
\item $(\uv(t, \cdot))_{t >0}$ uniformly converges to $\uv_0$ and in $H^1(\R^3)^3$ as $t \to 0$. 
\end{enumerate} 
\end{definition} 
The pressure at any  time $t$ is solution of the elliptic equation 
$$ \Delta p =  \div [ \div ( -  \uv \otimes \uv + A \g \uv)] ,$$
which gives $p$ once $\uv$ is calculated. 
This is why the velocity $\uv$ is sometimes referred to as the solution of the NSE instead of $(\uv, p)$, for which we set: 
\begin{eqnarray}  && \label{eq:w(t)def}  \label{eq:w(t)} W(t) = \inte |\uv (t, \x) |^2 d \x = || \uv (t, \cdot) ||_{0, 2}^2, \\
&&  \label{eq:jdet} J(t) = || \g \uv (t, \cdot) ||_{0, 2}, \\
&& \label{eq:vdet} \zoom V(t) = \sup_{\x \in \R^3} | \uv (t, \x) |  = || \uv (t, \cdot) ||_{0, \infty},   \\
&& \label{eq:v_m} \zoom V_m(t) = \sup_{\x \in \R^3} | D^m\uv (t, \x) |  = || D^m\uv (t, \cdot) ||_{0, \infty}.
\end{eqnarray}
At this stage, one of this quantity could be infinite at some date $t$.
We also set  
\BEQ  \label{eq:K}  K_A(t) = \left ( \inte A(t, \x) | \g \uv (t, \x)  |^2 d\x  \right )^{1 \over 2} = || \sqrt {A(t, \cdot)} \g \uv (t, \cdot) ||_{0, 2}. \EEQ 
\begin{definition}\label{def:singulier} 
We say that the solution becomes singular at $T$ if 
$V(t) \to \infty$ when $t \to T$, $t<T$. 
\end{definition}
We already know from J. Leray \cite{Ler1934}:
\BTHM  \label{thm:regular_A=0} Assume  $A = 0$. Then there exists $T = O (\nu V^{-2} (0) )$  such that the NSE (\ref{eq:NSE}) have a unique regular solution $(\uv, p)$ over the time interval $[0, T[$, 
 which
satisfies in addition, $\uv \in L^2([0,T], H^1(\R^3)^3) \cap C([0, T], L^2(\R^3)^3)$, 
and verifies  the energy equality for any $t \in \,  [0, T [$, 
\BEQ {1 \over 2} W(t) + \nu \int_0^t J^2 (t') dt' = {1 \over 2} W(0). \EEQ
Finally, if $\nu^{-3}  W(0) V(0)$ or $\nu^{-4} W(0) J^2(0)$ is small enough, the solution has no singularity and can be extended for all $t \in [0, \infty[$.  
\ETHM 
J. Leray also proved that the regular solution is of class $C^\infty$ is space and time in the interval $]0, T[$, the quantities $|| \uv(t, \cdot) ||_{m, 2}$ and $V_m(t)$ being controled by 
$W(0)$, $V(0)$ and $J(0)$. Unfortunately,  we are not able to generalize these results 
when $A \not= 0$ (see aditional comments in subsection \ref{sec:additional_comments}). 
\begin{Remark} Since J. Leray,  various definitions of regular solutions  to the NSE when $A=0$ and  results of local strong solutions have  been established by many different techniques. See for instance 
Fujita-Kato \cite{FK64} and Kato \cite{TK72}, as well as Meyer-Cannone \cite{CM95}, Chemin \cite{JYC11} and Chemin-Gallagher \cite{CG09} for further developements and references inside. 
\end{Remark} 
\subsection{Turbulent solutions}The notion of turbulent solution is based on a variational formulation and the energy inequality. The choice of the 
test vector fields space is essential. Within our framework, 
the space $\hbox{E}_\sigma$ specified below seems natural:
\BEQ \begin{array}{l} \zoom \hbox{E}_\sigma = \left \{ \wv \in L^1_{loc} (\R_+, H^3(\R^3)^3) \quad \hbox{s.t.}  \quad \wv \in C(\R^+, L^2(\R^3)^3),  
\phantom{\p \wv \over \p t} 
\right. \\  \hskip 2cm  \zoom \left.
 \, \g \wv \in L^\infty (\R, C_b(\R^3)^3), \quad  {\p \wv \over \p t} \in L^1_{loc} (\R_+, L^2(\R^3)^3), \quad  \div \, \wv = 0 \right \}.
 \end{array} 
\EEQ
This choice will be clear by the end of the paper. 
As usual, to find out the variational formulation, we take the dot product of the equation with a vector test field $\wv \in \hbox{E}_\sigma$ and we apply the Stokes formula,  if  the {\sl \`a priori} solution $(\uv,p)$ and its derivatives  satisfy suitable integrability conditions, what we assume at this stage. 
The time derivative $\p_t \uv$ also adresses an issue. In our formulation, it is considered in the sense of the distribution.  
 Then we formally get at a given time $t$:
\BEQ \label{eq:weak_form} \left \{  \begin{array}{l} \zoom \inte \uv_0(\x) \cdot \wv (0, \x) d\x  =   \inte \uv (t, \x) \cdot \wv (t, \x) d\x  \\~ \\ \zoom  + \int_0^t \inte [\uv (t', \x)  \otimes \uv (t', \x)] : \g \wv(t', \x) d\x dt' \\~ \\  \zoom 
-  \int_0^t \inte \uv(t', \x) \cdot \left [ \div ( (\nu + A(t', \x) ) \g \wv (t', \x) ) + {\p \wv (t', \x) \over \p t' } \right ]  d\x dt'. 
\end{array}  \right. 
\EEQ 
Notice that as $(\g p, \wv) = 0$ because $\div \,  \wv = 0 $, the pressure   is missing from this variational formulation, only involving the velocity $\uv$, which is 
standard in NSE's framework. 
\begin{definition}\label{def:turb_sols}  Let $\uv_0 \in L^2(\R^3)^3$ such that $\div \, \uv_0 = 0$. We say that $\uv = \uv (t, \x)$ is a turbulent solution to the NSE (\ref{eq:NSE}) with $\uv_0$ as initial datum,  if the following conditions are fulfilled: 
\begin{enumerate}[i)] \item For all $t \ge 0$, $\uv(t, \cdot)  \in L^2(\R^3)^3$, 
\item $\uv \in L^2(\R_+, H^1(\R^3)^3)$ and the following energy inequality holds:
\BEQ \label{eq:energy_inequality} {1 \over 2} W(t) + \nu \int_0^t J^2(t') dt' + \int_0^t K_A(t') dt'  \le {1 \over 2} W(0), \EEQ 
\item For all $t \ge 0$, and for all $\wv \in \hbox{E}_\sigma$, equality (\ref{eq:weak_form}) holds. 
\end{enumerate} 
\end{definition}
It is easily checked that when $\uv$ satisfies the items i) and ii) in Definition \ref{def:turb_sols}, then all the integrals in 
(\ref{eq:weak_form}) are well defined for whatever $\wv \in  \hbox{E}_\sigma$. 
\begin{assumption}\label{ass:donnee_initiale} To avoid repetition, we will assume throughout the rest of the paper that $\uv_0 \in L^2(\R^3)^3$ and  $\div \, \uv_0 = 0$. 
\end{assumption}
\BTHM \label{thm:Leray} (J. Leray \cite{Ler1934}) Assume $A = 0$. Then   there exists  a turbulent solution to the NSE (\ref{eq:NSE}).  
Moreover the turbulent solution becomes regular  on the interval $] C W(0)^2 / \nu^5, \infty[$, for some constant $C$. \ETHM
\begin{Remark} Leray was considering  a test vector field made of $C^\infty$ vector field in space and time, which does not change much.
\end{Remark}
By the end of the paper, we will have proved: 
\BTHM  \label{thm:turb_exist} Assume 
\begin{enumerate}[i)] 
\item $A \ge 0$ a.e in $\R_+ \times \R^3$,  
\item $A \in C_b(\R_+, W^{1, \infty}( \R^3))$, 
\item $A$ is with compact support in space uniformly in $t$, which means that there exists $R_0 >0$, such that 
$\forall \, t \ge 0$, $\forall \, \x \in \R^3$ s.t. $| \x | \ge R_0$, $A(t, \x) = 0$. 
\end{enumerate} 
Then the NSE (\ref{eq:NSE}) have a turbulent solution. 
\ETHM 
In the statement above, $C_b$ refers to as continuous bounded functions.  The proof is based on  regularizing the NSE  by means of mollifiers sized by a parameter $\E >0$, then taking the limit when $\E \to 0$. 
\begin{Remark} We do not know if the turbulent solution becomes regular after a given time $T$ when $A \not=0$ (see section \ref{sec:additional_comments} for additional comments). \end{Remark} \begin{Remark} Solutions based on a variational formulation like (\ref{eq:weak_form}) are sometimes referred  to as ``very weak solutions" (see  Lions-Masmoudi 
\cite{LM01}). 
\end{Remark}

\section{Regularized system}\label{sec:a_priori_analysis} \label{sec:regularized_system} 

\subsection{Mollifier} 

Let $\rho \in C^\infty(\R^3)$ denotes a non negative function with compact support such that
$$ \inte  \rho(\x) d\x = 1,$$
and let 
$$\rho_\E (\x) = {1 \over \E^3}\, \rho \left ( { \x \over \E} \right ).$$
Any $U \in L^1_{\hbox{\tiny loc} } (\R^3)$ being given, we set 
$$ \overline U(\x) = \rho_\E \star U (\x) = \int_{\R^3} \rho_\E \left ( \x - \yv \right ) U(\yv) d\yv. $$
It is well known that  $\overline U$ is of class $C^\infty$ and  when $U \in L^p(\R^3)$, $1 \le p < \infty$, then $\overline U$ converges to $U$ in $L^p(\R^3)$ when $\E \to 0$. We will need the following formal estimates: 
\begin{eqnarray} 
&& \label{eq:conv_ineq_0} ||  \overline U ||_{0,\infty} \le || U ||_{0,\infty}, \\
&& \label{eq:conv_ineq}  || D^m   \overline U ||_{0,\infty} \le {C_{m} \over \E^{ {3 / 2} + m} } || U ||_{0, 2}, \phantom{\int_0^1} \\
&& \label{eq:conv_ineq_2}  || \overline U - U ||_{m, 2} \le C_m \, \E \,  || U ||_{m-1, 2},  
\end{eqnarray}  
where $C_{m}  $ is a constant that only depends on $m$, the shape of $\rho$ and its derivatives. These estimates as well as many others properties about regularization by convolution can be found for instance in \cite{brezis, Ler1934, MB02}.  
\smallskip

Finally, we assume that the kernel $\rho$ is an even function, so that  the regularization operator $U \to \overline U$ is self adjoint in $L^2$. 

\subsection{Approximated system} 

We regularize the convection and the eddy viscosity terms as follows:
\begin{enumerate}[i)]
\item Following J. Leray, the convective term $\uv \cdot \g \uv$ is approximated by $\um \cdot \g \uv$,  
\item  The eddy viscosity term $-\div(A \g \uv)$ is approximated by $- \overline{ \div \, ( A \g \um)}$. 
\end{enumerate} 
This way of regularizing the eddy viscosity term provides the advantage that it preserves 
its dissipative feature. Indeed, we formally have by the self adjointness of the bar operator:
\BEQ (- \overline{ \div \, ( A \g \um) }, \uv ) = ( - \div \, ( A \g \um), \um) = (A \g \um, \g \um) = \inte A | \g \um |^2 \ge 0, \EEQ
as $A \ge 0$, and where $(\cdot, \cdot)$ denotes the scalar product in $L^2$.

According to Assumption \ref{ass:donnee_initiale}, the initial datum also needs to be regularized. Thus, we recall what is the regularized NSE, already written in the introduction: 
\BEQ \label{eq:NSE_eps} \left \{ \begin{array}{ll} 
\p_t \uv + \um \cdot \g \uv - \nu \Delta \uv -  \div \, ( \overline{A \g \um}) + \g p = 0, & \hskip 2cm\hbox{(i)} \\
\div \, \uv = 0,  \phantom{\int }   & \hskip 2cm \hbox{(ii)}\\
\uv_{t=0} = \overline{ \uv_0}. & \hskip 2cm \hbox{(iii)}
\end{array} \right. 
\EEQ
 
We adopt for the regularized NSE  (\ref{eq:NSE_eps}), the notion of regular solution given by Definition \ref{def:regular}. 
By the end of the next section, we will have proved: 
\BTHM \label{thm:exist_approch} Assume  $A \ge 0$, $A \in C_b(\R_+ , L^\infty( \R^3)^3)$. 
Then  the regularized NSE  (\ref{eq:NSE_eps}) have a unique regular solution $(\uv, p) \in C (\R_+, H^m(\R^3)^3 \times H^m(\R^3))$, $\forall \, m \in \N$,  which satisfies  the energy balance 
\BEQ \label{eq:energy_balance}  {1 \over 2} W(t) + \nu \int_0^t J^2(t') dt' + \int_0^t  K_{A,\E}^2(t') dt' = {1 \over 2} W_\E(0). \EEQ 
\ETHM
Recall that $W(t) = || \uv (t, \cdot)||_{0,2}^2$ and $J(t) = || \g \uv (t, \cdot) ||_{0,2}$ were initially defined by (\ref{eq:w(t)}) and (\ref{eq:jdet}). The quantity
$W_\E(0) = || \overline {\uv_0} ||_{0,2}^2$ verifies
\BEQ \label{eq:w(o)} W_\E(0)  \le W(0) = \inte | \uv_0 (\x) |^2 d\x. \EEQ
We also have set
\BEQ \label{eq:KAEpsilon}  K_{A,\E}(t) = \left ( \inte A(t, \x) | \g \um (t, \x) |^2 d\x \right )^{1 \over 2} . \EEQ
A similar result is in  Leray's paper \cite{Ler1934} when $A = 0$, section 26. 
His argument, based on the control of $V(t) = || \uv (t, \cdot)||_{0, \infty}$, does not work when $A \not=0$ (the main reason is detailed in section \ref{sec:additional_comments}). This is why  we had to write an original proof of Theorem \ref{thm:exist_approch} when $A \not=0$, based  this time on the control of the $H^m$ norms of the velocity, {\sl i.e.} $|| \uv (t, \cdot) ||_{m, 2}$, for any $m \ge 0$. To do so, we will find  out sharp estimates,  the control parameters of which 
are the $L^2$ norm of $\uv_0$ and $\E$. This led us to make improvements in the understanding of the equivalence between the equations and the integral representation, as well as in the processing of the 
pressure by modern regularity  results, Sobolev spaces and the Calder\`on-Zigmund Theorem \cite{ES70}, which was not known as J. Leray was writing his paper.

\begin{Remark} The assumption ``$A$ is with compact support in space uniformly in time" is not needed in this statement. Note that no further information about its gradient is required at this stage. 
\end{Remark}
\begin{assumption}\label{ass:hyp_turbu}  Throughout the rest of the paper, we will assume at least that $A \ge 0$, $A \in C_b(\R_+ , L^\infty( \R^3))$.\end{assumption} 
\subsection{Oseen representation}
The proof of Theorem \ref{thm:exist_approch} is based on an integral formulation of the regularized NSE (\ref{eq:NSE_eps}) by a suitable Kernel known as the Oseen's potential, recalled in this subsection. 
\smallskip

Let us consider the evolutionary Stokes problem with a continuous source term ${\bf f}$ and a continuous initial datum $\vv_0$: 
\BEQ \label{eq:Stokes_Problem} \left \{ \begin{array}{l} 
\p_t \uv - \nu \Delta \uv  + \g p = {\bf f}, \\
\div \, \uv = 0, \\
\uv_{t=0} = \vv_0.  
\end{array} \right. 
\EEQ
C. Oseen \cite{CO11, CWO27} shown that there exists a tensor ${\bf T}=(T_{ij} )_{1 \le i, j \le 3}$ such that if $(\uv, p)$ is a regular solution of (\ref{eq:Stokes_Problem}), then the velocity $\uv$ solution of 
(\ref{eq:Stokes_Problem})
verifies 
\BEQ \uv (t, \x) = (Q \star \vv_0) (t, \x) + \int_0^t \inte  {\bf T} (t-t', \x - \yv) \cdot {\bf f}(t', \yv) \, d \yv, \EEQ
where 
\BEQ Q(t, \x) = {1 \over (4 \pi \nu t)^{3/2} }e^{- {| \x |^2 \over 4 \nu t}} ,\EEQ
is the heat kernel and 
$$  (Q \star \vv_0) (t, \x) = \inte Q(t, \x - \yv ) \vv_0(\yv) d\yv. $$
The components of ${\bf T}$ 
can be specified as follows.  Let $$G(t,x)=\frac{1}{\abs{x}}\int_0^{\abs{x}} \frac{e^{-\frac{\rho^2}{4\nu t}}}{\sqrt{t}} \dd \rho.$$
The function $G$ satisfies the PDE
\BEQ \Delta \left ( \p_t G + \nu \Delta G \right ) = 0, \EEQ
and the Oseen's tensor is given by 
\BEQ \forall \,  i \not= j \not=k, \quad T_{ii} = - {\p^2 G \over  \p x_j^2} - {\p ^2G \over  \p x_k^2}, \quad \forall \, i \not=j, \quad T_{ij} = {\p^2 G \over  \p x_j\p x_i}.  \EEQ 
This tensor satisfies the inverse Euler system, where $L_i = (T_{i1}, T_{i2}, T_{i3})$,
\BEQ \left \{ \begin{array} {l} \zoom \p_t L_i + \nu \Delta L_i - \nabla {\p \over \p x_i} \left (\p_t G + \nu \Delta G \right ) = 0, \\
\div \, L_i = 0. \end{array} \right. 
  \EEQ
In Appendice \ref{app:ossen}, the following estimates are proved:
\begin{eqnarray}  \label{eq:estimate_oseen} | {\bf T}(t, \x ) |  \le {C \over (| \x |^2 + \nu t)^{3 \over 2}}, \\ 
\label{eq:estimate_oseen_m} \forall \, m \ge 0, \quad  | D^m  {\bf T}(t, \x ) | \le {C_m \over (| \x |^2 + \nu t)^{m+3 \over 2}}, \end{eqnarray}
$C$ and $C_m$ being some constants. We start with the following regularity result.
\BL \label{lem:integrations_oseen} For all $T >0$, $m \ge 0$, $D^m {\bf T}  \in L^p([0, T], L^q(\R^3)$, for exponents $(p, q)$ such that $q > 3/ (m+3)$ and $p < (3/2) q'$, where $1/q + 1/q' = 1$. 
\EL 
\begin{proof} By the estimate (\ref{eq:estimate_oseen_m}) we get
\BEQ  \inte | D^m {\bf T} (t, \x) |^q d\x \le 
 C \int_0^\infty { r^2  dr \over (r^2 + \nu t )^{ q(m+3) \over 2 } }  = { 1 \over (\nu t )^{q(m+3)- 3 \over 2 } } 
\int_0^\infty {\rho^2 d\rho \over (\rho^2 + 1 )^{ q(m+3) \over 2 } }.
\EEQ 
Therefore,  $D^m {\bf T}  \in L^p([0, T], L^q(\R^3)$ for $(p,q)$ such that 
$$  {q(m+3)  } - 2 >1, \quad {p (q(m+3)- 3) \over 2q } <1, $$
hence the result. 
\end{proof} 
In particular, for $m=1$, we have the following corollary: 
\begin{corollary} Let $ t > 0$. Then  $t' \to || \g \, {\bf T} (t-t', \cdot ) ||_{0, 1} \in L^1([0, t])$ and, 
\BEQ \label{eq:calcul_integral}  || \g \, {\bf T}  (t-t', \cdot ) ||_{0, 1} \le { C \over \sqrt{\nu (t-t')} } .\EEQ
\end{corollary}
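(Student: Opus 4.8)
The plan is to derive the corollary directly from Lemma \ref{lem:integrations_oseen} applied with $m=1$, supplemented by an explicit scaling computation that exhibits the claimed $1/\sqrt{\nu(t-t')}$ bound. First I would fix $t>0$ and set $s = t-t'>0$, so that the quantity to control is $\|\g\,{\bf T}(s,\cdot)\|_{0,1}$. Using the pointwise estimate (\ref{eq:estimate_oseen_m}) with $m=1$, namely $|\g\,{\bf T}(s,\x)|\le C_1/(|\x|^2+\nu s)^{2}$, I would pass to spherical coordinates and write
\BEQ
\|\g\,{\bf T}(s,\cdot)\|_{0,1} = \inte |\g\,{\bf T}(s,\x)|\,d\x \le C \int_0^\infty {r^2\,dr\over (r^2+\nu s)^2}.
\EEQ
This is exactly the $q=1$, $m=1$ instance of the integral already computed in the proof of Lemma \ref{lem:integrations_oseen}, for which convergence at infinity requires $q(m+3)-2 = 4-2 = 2 > 1$; that holds, so the spatial integral is finite for each fixed $s>0$.

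The heart of the estimate is the scaling in $s$. Substituting $r = \sqrt{\nu s}\,\rho$ gives $dr = \sqrt{\nu s}\,d\rho$ and $r^2 = \nu s\,\rho^2$, so the numerator contributes $(\nu s)\rho^2$, the measure contributes $\sqrt{\nu s}\,d\rho$, and the denominator contributes $(\nu s)^2(\rho^2+1)^2$. Collecting powers of $\nu s$ yields an overall factor $(\nu s)^{1+1/2-2} = (\nu s)^{-1/2}$, whence
\BEQ
\int_0^\infty {r^2\,dr\over (r^2+\nu s)^2} = {1\over \sqrt{\nu s}}\int_0^\infty {\rho^2\,d\rho\over (\rho^2+1)^2}.
\EEQ
Since the remaining dimensionless integral $\int_0^\infty \rho^2(\rho^2+1)^{-2}\,d\rho$ is a finite absolute constant (it equals $\pi/4$), absorbing it together with $C_1$ and the solid-angle factor $4\pi$ into a single constant $C$ produces the bound (\ref{eq:calcul_integral}), namely $\|\g\,{\bf T}(s,\cdot)\|_{0,1}\le C/\sqrt{\nu s} = C/\sqrt{\nu(t-t')}$.

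It then remains to record the integrability-in-time claim $t'\mapsto \|\g\,{\bf T}(t-t',\cdot)\|_{0,1}\in L^1([0,t])$. This follows immediately from the pointwise bound just obtained: the function $s\mapsto C/\sqrt{\nu s}$ has the integrable singularity $\int_0^t (\nu s)^{-1/2}\,ds = 2\sqrt{t/\nu} < \infty$, and a change of variable $s=t-t'$ transfers this to integrability on $[0,t]$ in the variable $t'$. Alternatively one may invoke Lemma \ref{lem:integrations_oseen} directly with $m=1$, $q=1$: the admissibility conditions $q>3/(m+3)=3/4$ and $p<(3/2)q'$ are satisfied by $q=1$ and $p=1$ (here $q'=\infty$, so the constraint $p<\infty$ is trivially met), giving $\g\,{\bf T}\in L^1([0,t],L^1(\R^3))$. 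I do not anticipate a genuine obstacle here, since everything reduces to the single-variable scaling already performed in the lemma; the only point requiring a modicum of care is verifying that the $q=1$ endpoint is admissible and that the resulting time singularity $s^{-1/2}$ is integrable, both of which are elementary.
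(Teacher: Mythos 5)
Your proof is correct and follows exactly the paper's route: the corollary is obtained by specializing the computation in the proof of Lemma \ref{lem:integrations_oseen} to $m=1$, $q=1$, where the exponent $(q(m+3)-3)/2$ equals $1/2$ and yields the $(\nu(t-t'))^{-1/2}$ bound, whose time-singularity is integrable on $[0,t]$. The explicit scaling substitution and the evaluation of the dimensionless integral are just a spelled-out version of what the paper leaves implicit.
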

Before stating the next result, we must specify some notations. Let $V = (V_{ij})_{1 \le i, j \le 3}$, $W=(W_{ij})_{1 \le i, j \le 3}$  two tensors, $V \cdot W = (V_{ij} W_{jk})_{1 \le i, k \le 3}$ their product . The vector 
$\div V$ is given component by component by:
\BEQ [\div  V]_i = \p_j V_{ij}, \EEQ
where $\p_j = \p / \p x_j$. The vector $\g V : W$ is given by
\BEQ [\g V : W]_i = \p_k V_{ij} W_{jk}. \EEQ
Finally
\BEQ \div^2 V = \p_i \p_j V_{ij}. \EEQ
\BL \label{lem:switch_integral} Let $(\uv,p)$ be a regular solution of the regularized NSE (\ref{eq:NSE_eps}) over the time interval 
$[0, T[$ for a given $T>0$. Then the velocity $\uv$ satisfies for all $t \in [0, T[$, 
\BEQ \label{eq:oseen_rep}   \left \{ \begin{array} {l} \uv (t, \x) =  (Q \, \star \,   \overline{ \uv_0}) (t, \x)  \, \\~ \\ \zoom 
 \zoom  + \int_0^t \inte \g {\bf T} (t-t', \x - \yv) :  [\um (t', \yv) \otimes \uv (t', \yv) - \overline{A\g \um } (t', \yv)  ] \, d \yv dt',  \end{array} \right. \EEQ
and the pressure $p$ is deduced from the velocity $\uv$ by the formula:
 \BEQ \label{eq:pression_reg} p (t, \x) = {1 \over 4 \pi} \div^2 \int_{\R^3} {\um (t, \yv ) \otimes \uv(t, \yv) -\overline{A  \g \um} (t, \yv) \over | \x - \yv |}    d\yv  , \EEQ
 \EL
 \begin{proof} Let 
 \BEQ \label{eq:definition_de_X} {\bf X} (t, x) = \um (t, \x) \otimes \uv (t, \x) - \overline{A\g \um } (t, \x),\EEQ
 so that the regularized NSE (\ref{eq:NSE_eps}) can be written in the form
 \BEQ \label{eq:Stokes_Problem_reg} \left \{ \begin{array}{l} 
\p_t \uv - \nu \Delta \uv  + \g p = - \div {\bf X} , \\
\div \, \uv = 0, \\
\uv_{t=0} = \overline{\uv_0}.  
\end{array} \right. 
\EEQ
 The proof is divided in two steps: 
 \smallskip
 
\hskip 1cm {\sl Step 1)}  We first study the regularity of ${\bf X}$ in order to obtain the formula (\ref{eq:pression_reg}) and the formula: $\forall \, t \in [0, t[$, 
  \BEQ \label{eq:oseen_formula_vitesse}  \uv (t, \x) =  (Q \, \star \,   \overline{ \uv_0}) (t, \x)  + \div \int_0^t \inte {\bf T} (t-t', \x- \yv) \cdot {\bf X}(t', \yv) dt'. \EEQ
\hskip 1cm {\sl Step 2)} We prove that we can switch the integral and the derivative in the formula (\ref{eq:oseen_formula_vitesse}). 
\smallskip

{\sl Step 1)}
 On one hand we have
 $$ || \um (t', \cdot) \otimes \uv (t', \cdot) ||_{0, 2} \le || \um (t, \cdot) ||_{0, \infty}  || \uv (t, \cdot) ||_{0, 2},$$
 and by (\ref{eq:conv_ineq}),    
 $$ || \um (t, \cdot) ||_{0, \infty} \le  C \E^{-{3 \over 2} } || \uv (t, \cdot) ||_{0, 2} ,$$
 which leads to
\BEQ  || \um (t', \cdot) \otimes \uv (t', \cdot) ||_{0, 2} \le C \E^{-{3 \over 2} } W(t). \EEQ
On an other hand, similar calculus inequalities lead to
\BEQ || \overline {A \g \um }(t', \cdot) ||_{0, 2} \le C || N_A ||_\infty \E^{-1} \sqrt {W(t)} , \EEQ
where we have set 
\BEQ N_A (t) = || A(t, \cdot ) ||_{0, \infty}. \EEQ
Therefore, 
\BEQ \label{eq:estimation_L2norm_X}  || {\bf X} (t, \cdot) ||_{0, 2} \le C \E^{-1} \left [  \E^{-{1 \over 2}} W(t) + || N_A ||_\infty \sqrt {W(t) } \right ] . \EEQ
Then according to  the item ii) in the definition (\ref{def:regular}), 
\BEQ \label{eq:reg_1_X}   \forall  \, \tau \in [0, T[, \quad {\bf X} \in L^\infty ([0, \tau], L^2(\R^3)^9). \EEQ
 By (\ref{eq:conv_ineq}) combined with (\ref{eq:413}), we obtain 
\BEQ \label{eq:est12}  || \overline{A(t', \cdot) \g \um (t, \cdot)} ||_{0, \infty} \le C \E^{-5/2} || N_A ||_\infty \sqrt {W(t)},\EEQ
and by (\ref{eq:conv_ineq_0}), we finally 
have
\BEQ \label{eq:estimate_prems_x_infty}  || {\bf X} (t, \cdot) ||_{0, \infty} \le V(t)^2 + C \E^{-5/2} || N_A ||_\infty \sqrt {W(t)}. \EEQ
Then, again by  the item ii) in the definition (\ref{def:regular}), for any $\tau <T$, 
\BEQ \label{eq:reg_2_X}  \forall  \, \tau \in [0, T[, \quad  {\bf X} \in L^\infty ([0, \tau] \times \R^3)^9. \EEQ
Moreover, $D^\alpha {\bf X}$ is continuous whatever $| \alpha | = 2$, in view of item i) of Definition \ref{def:regular} and the regularizing effect of the bar operator. Therefore, (\ref{eq:reg_1_X}) and (\ref{eq:reg_2_X})  being satisfied,  we can apply 
 the lemma 8 in \cite{Ler1934} and we get (\ref{eq:pression_reg})  as well as  (\ref{eq:oseen_formula_vitesse}).

\smallskip
{\sl Step 2)}\footnote{Leray states his Lemma  8 as a consequence of a uniqueness result. If the uniqueness result is entirely proved, there is in his paper 
\cite{Ler1934} no proof of thislemma 8, although it is quite reasonnable. In this present step 2), we are kniting things  backward and we indirectly  more or less prove this lemma 8, without considering the uniqueness argument, based on the $L^2$ integrabilities of ${\bf X}$ and $\uv$, which holds in our case. This proof  mainly explains the underlying machine for the 
derivation  of the following $H^m$ estimates.}  
 In what follows we set
\BEQ N_{\tau, {\bf X} } = || {\bf X} ||_{L^\infty ([0, \tau] \times \R^3)}. \EEQ
Let 
 \BEQ V(t, \x) = \int_0^t \inte {\bf T} (t-t', \x- \yv) \cdot {\bf X}(t', \yv) dt'. \EEQ
 Let ${\bf h}_i = h {\bf e}_i$, for $i = 1, 2, 3$.  Let $V_h(t, \x)$ denotes the function 
$$ V_h(t, \x) =  {1 \over h} \left [ V(t, \x+ {\bf h}_i)- V(t, \x)  \right ] . $$
on one hand we have,
 \BEQ \label{eq:switch}  \p_i V (t, \x) = \lim_{h \to 0} V_h(t, \x), \EEQ
 and on the other hand, 
 \BEQ V_h(t, \x) = 
  \int_0^t \inte {1 \over h} [ {\bf T} (t-t', \x- \yv+ {\bf h}_i) - {\bf T} (t-t', \x- \yv)] : {\bf X}(t', \yv) dt'.
 \EEQ
 Let $U_h(t, t'; \x, \yv)$ denotes the function 
 $$U_h(t, t'; \x, \yv) =  {1 \over h}  [ {\bf T} (t-t', \x- \yv+ {\bf h}_i) - {\bf T} (t-t', \x- \yv)] : {\bf X}(t', \yv), $$
 so that 
 $$ V_h(t, \x)  = \int_0^t \inte U_h(t, t'; \x, \yv)  d \yv dt'.$$
 We will pass to the limit in this integral for $h \to 0$, by two consecutive applications of the Lebesgue's theorem. 
 By definition, for any given $0 \le t' \le t$, $\x, \yv \in \R^3$, 
 \BEQ \lim_{h \to 0} U_h(t, t'; \x, \yv) = \p_i {\bf T} (t-t', \x- \yv) : {\bf X}(t', \yv). \EEQ
 We write the standard formula:
\BEQ \label{eq:acc_finis}  {1 \over h} [ {\bf T} (t-t', \x- \yv+ {\bf h}_i) - {\bf T} (t-t', \x- \yv)] = \int_0^1 \p_i {\bf T} (t-t', \x- \yv+ s {\bf h}_i) ds. \EEQ
 By consequence, by (\ref{eq:estimate_oseen_m}), we have for any fixed $0 \le t' < t$, $\x \in \R^3$, and any $h$ such that $| h | \le 1/2$, 
$$ | U_h(t, t'; \x, \yv) | \le C  N_{t, {\bf X} }  \left [ {{\rm 1} \hskip -0,1cm {\rm I}_{ B(\x, 1) } \over \nu^2 (t-t')^2}+  {{\rm 1} \hskip -0,1cm {\rm I}_{ B(\x, 1)^c } \over ( | \x - \yv |^2 / 4 + \nu (t-t') )^2 }  \right ] = H(t, t'; \x, \yv),  $$
and we observe that $ H(t, t'; \x, \yv) \in L^1_{\yv} (\R^3)$ for any given $(t, t'; \x)$. Then by Lebesgue's Theorem, 
\BEQ \label{eq:switch_2}  \lim_{h \to 0} \inte  U_h(t, t'; \x, \yv) d\y = \inte  \p_i {\bf T} (t-t', \x- \yv) : {\bf X}(t', \yv) d \yv = v(t, t'; \x),\EEQ
for all $0 \le t' < t$, $\x \in \R^3$. Let 
$$ v_h (t, t'; \x) = \inte  U_h(t, t'; \x, \yv) d\yv. $$
In other words, we have proved  that for any fixed $(t, \x) \in \,  ]0, T[ \times \R^3$, 
$$ \forall \, t' \in [0, t[, \quad \lim_{h \to 0} v_h (t, t'; \x)  = v(t, t'; \x).  $$
Notice that 
$$ \int_0^t  v_h (t, t'; \x) dt' = V_h (t, \x), $$
so that we must take the limit in the integral above when $h \to 0$. 
By using (\ref{eq:acc_finis}) combined with (\ref{eq:estimate_oseen_m}) once again, we obtain by Fubini's Theorem, 
\BEQ | v_h (t, t'; \x) | \le C  N_{t, {\bf X} }  \int_0^1 ds \inte { d \yv \over ( | \x - \yv+ s {\bf h}_i |^2 + \nu (t-t') )^2},    \EEQ 
which leads to, by the same calculation as that in the proof of Lemma \ref{lem:integrations_oseen},
\BEQ  | v_h (t, t'; \x) | \le { C  N_{t, {\bf X} } \over \sqrt { \nu (t-t') }}  \in L^1([0, t]). \EEQ
Then, by Lebesgue's Theorem once again,   
\BEQ \lim_{h \to 0} \int_0^t v_h (t, t'; \x) dt'= \lim_{h \to 0} V_h(t, \x) = \int_0^tv(t, t'; \x) dt',
\EEQ 
which means  by  (\ref{eq:switch}) and (\ref{eq:switch_2}),  
$$ \p_i \int_0^t \inte {\bf T} (t-t', \x- \yv) \cdot {\bf X}(t', \yv) dt' =  \int_0^t \inte \p_i{\bf T} (t-t', \x- \yv) \cdot {\bf X}(t', \yv) dt',$$
hence formula (\ref{eq:oseen_rep}) by (\ref{eq:oseen_formula_vitesse}).  
 \end{proof}
 \begin{Remark} By a similar reasonning based on Lebesgue's Theorem, we also can prove that $\uv \in C([0, T[, L^2(\R^3)^3) \cap C([0, T[, L^\infty(\R^3)^3)$. More generally, the time continuity of the velocity  in $H^m$  will be proved in Lemma \ref{lem:energy} below by directly using the equation. 
 \end{Remark}\section{A priori estimates and energy balance} \label{eq:Energy_balance} 
 
 We derive from the integral representation (\ref{eq:oseen_rep}) {\sl \`a priori} $H^m$ and $W^{m, \infty}$ estimates satisfied by the velocity part of any regular solution of the 
 regularized NSE (\ref{eq:NSE_eps_intro}). We start with local time estimates which we afterwards  extend for all time. We also focus on the obtention of energy equalities, which requires  
 estimates for the pressure. 
 
The constants involved in the inequalities of this section, depend on those involved in the inequality (\ref{eq:estimate_oseen_m}), on the shape of $\rho$ and its derivatives, which will be not 
systematically mentioned.

Throughout this section \ref{eq:Energy_balance}, $(\uv, p)$ denotes a regular solution of the regularized NSE  (\ref{eq:NSE_eps}). Moreover, Assumption 
\ref{ass:donnee_initiale}  and Assumption \ref{ass:hyp_turbu} hold. 
\subsection{Local time $L^2$ and $L^\infty$ estimates} \label{sec:loc_time_est}
We recall that the definition of a regular solution requires that the velocity satisfies $\forall \, \tau < T$, $\uv \in L^\infty([0, \tau], L^2(\R^3)^3) \cap L^\infty([0, \tau] \times \R^3)^3$. As we saw it in the previous section, 
this information  led to the integral representation formula (\ref{eq:oseen_rep}). The aim of this section is to show that   the $L^\infty(L^2)$ and $L^\infty(L^\infty)$ norms of $\uv$  are entirely driven by the initial kinetic energy, namely the quantity $W(0)$, which will be derived from this integral representation  formula (\ref{eq:oseen_rep}) combined with the V-maximum principle set out in Appendice \ref{ap:appen_volt}. 
 \BL \label{lem:local_energy} There exists $t_\E (W(0)) >0$ such that $\forall \, t \in [0, t_\E (W(0))]$, 
\begin{eqnarray} \label{eq:w(0)_first} && W(t) \le 4 W(0). \\
&&  \label{eq:V(t)_first} V(t) \le 2 C \E^{- {3 \over 2} }  \en. 
\end{eqnarray} 
Moreover the function 
 $x \to t_\E (x)$ is a non increasing function of $x$.  \EL
 
 \begin{proof}  We start by proving (\ref{eq:w(0)_first}) over a time interval $[0, t_{1, \E}(W(0))]$. The field  $\uv$ is a regular solution to the regularized NSE, therefore as we already have said, 
 it belongs to $L^\infty([0, T/2], L^2(\R^3)^3)$. Let 
 $$ W_{T/2} = \sup_{t \in [0, T/2]} W(t) < \infty. $$
 Assume  that 
\BEQ  4 W(0) < W_{T/2},\EEQ 
otherwise take $t_{1, \E}(W(0)) = T/2$. 
 Starting from this and working on the time interval $[0, T/2]$, we deduce from the integral representation (\ref{eq:oseen_rep}) combined  with the Young's inequality, 
 \BEQ \label{eq:est111} \sqrt {W(t) } \le \sqrt {W(0)} +
\int_0^t || \g  {\bf T}  (t-t', \cdot ) ||_{0, 1}|| {\bf X} (t', \cdot) ||_{0, 2} dt', 
\EEQ
where ${\bf X}$ is defined by (\ref{eq:definition_de_X}). 
Therefore, by (\ref{eq:calcul_integral}),  
 \BEQ \label{eq:bound_wt} \sqrt {W(t) } \le \sqrt {W(0)} + C \int_0^t { || {\bf X} (t', \cdot) ||_{0, 2} \over \sqrt { \nu (t-t') }} dt'.
 \EEQ
The estimate (\ref{eq:estimation_L2norm_X}) yields 
\BEQ \label{eq:volt_energy_1} \sqrt {W(t) } \le \sqrt {W(0)}  + C \E^{-1} \int_0^t { \E^{- 1 /2} W(t') + ||N_A||_\infty \sqrt {W(t')}   \over \sqrt { \nu (t-t') }} dt' .\EEQ
Let $P(f)$ be defined by 
\BEQ\left \{ \begin{array}{ll} P(f) = 0 & \hbox{if} \, f \le 0, \\
P(f) = C \E^{-1} [\E^{- 1 /2} f^2+ ||N_A||_\infty  f ] & \hbox{if} \,  0 \le f \le \sqrt  {W_{T/2}}, \\
P(f) = C \E^{-1} [\E^{- 1 /2} W_{T/2}+ ||N_A||_\infty  \sqrt {W_{T/2}} ]  & \hbox{if} f \le \sqrt {W_{T/2}} . 
\end{array}  \right. 
\EEQ
As $t \in [0, T/2]$ and $W(t) \le W_{T/2}$, the inequality (\ref{eq:volt_energy_1}) shows that $t \to \sqrt {W(t)} $ is a subsolution
of  the non linear Volterra equation (see Appendix \ref{ap:appen_volt})
\BEQ \label{eq:volt_energy} f(t) =  \sqrt {W(0)}  +  \int_0^t { P(f)(t')  \over \sqrt { \nu (t-t') }} dt' , \EEQ
 with the kernel 
$$  K(t) = {1 \over \sqrt {\nu t}}  \in L^1([0, T]).
$$
In this equation, $P$ is indeed a non decreasing Lipchitz continuous function. 
As $4 W(0) < W_{T/2}$, the constant function $g(t) = 2 \sqrt {W(0)}$ is a supersolution of Equation (\ref{eq:volt_energy}) over the time interval $[0, t_\E (W(0))]$, where  
\BEQ \label{eq:temps1}  t_{1,\E} (x) =  \inf \left ({ \nu \E^2 \over 4 C^2 (||N_A||_\infty  + \E^{-1/2} \sqrt {x} )^2}, {T \over 2}  \right ).  \EEQ
We then deduce from the V-maximum principle proved in Lemma \ref{lem:max_princ},  that
\BEQ \label{eq:413}  \forall \, t \in [0, t_{1,\E} (W(0))], \quad  \sqrt {W(t) } \le 2 \sqrt {W(0)}. \EEQ
Notice that the function $x \to t_{1,\E} (x)$ given by (\ref{eq:temps1}) is non increasing.
\smallskip

Let us now prove (\ref{eq:V(t)_first}). Take $t, t' \in [0, t_{1, \E}(W(0)) ]$. Combining (\ref{eq:est12}) with (\ref{eq:413}), we obtain 
\BEQ \label{eq:est1}  || \overline{A(t', \cdot) \g \um (t', \cdot)} ||_{0, \infty} \le  C \E^{-5/2} || N_A ||_\infty \sqrt {W(0)}. \EEQ
Moreover, repeating the combination of  (\ref{eq:conv_ineq}) with (\ref{eq:w(0)_first}) gives
$$ || \um (t, \cdot) ||_{0, \infty} \le  C \E^{-{3 \over 2} } || \uv (t, \cdot) ||_{0, 2} \le  C \E^{-{3 \over 2} } \sqrt {W(0)}, $$
hence
\BEQ \label{eq:est2} || \um (t', \cdot) \otimes \uv (t', \cdot)  ||_{0, \infty} \le C \E^{- { 3 \over 2}}  \sqrt {W(0)} V(t'), \EEQ
which improves the first estimate (\ref{eq:estimate_prems_x_infty}) of $ || {\bf X}(t', \cdot) ||_{0, \infty}$  by giving 
\BEQ  \label{eq:reestimateXLinfty} || {\bf X}(t', \cdot) ||_{0, \infty} \le C\E^{-{3 \over 2}} \sqrt{W(0)}  \left [ V(t') + \E^{-1} || N_A ||_\infty \right ]. \EEQ
Finally, as 
\BEQ V_\E (0) = || \overline{\uv_0} ||_{0, \infty} \le C \E^{-{3 \over 2} } || \uv ||_{0, 2} = C \E^{-{3 \over 2} } \sqrt {W(0)}, \EEQ
we get
\BEQ \label{eq:est3}  ||  Q \, \star \,   \overline{ \uv_0} (t, \x) ||_{0, \infty} \le V_\E  (0) \le C \E^{-{3 \over 2} } \sqrt {W(0)}.  \EEQ
We combine (\ref{eq:reestimateXLinfty}) and  (\ref{eq:est3}) with the formula (\ref{eq:oseen_rep}) and  (\ref{eq:calcul_integral}), which yields for any 
$t \in [0, t_{1, \E}]$, 
\BEQ \label{eq:constant_C} V (t) \le C\E^{-{3 \over 2}} \sqrt{W(0)} \left  (1+  \int_0^t {V(t') + \E^{-1} || N_A ||_\infty \over \sqrt { \nu (t-t') }} dt' \right  ).  \EEQ
Therefore, using the V-maximum principle again gives (we skip the details)
$$ \forall \, t \in [0, t_\E(W(0))], \quad  V(t) \le 2 C \E^{- {3 \over 2} } \en,$$
where, after a straighforward calculation, 
$$ t_\E(x) = \inf \left ({ 2 \E \nu  \over  ( \E^{-{1 \over 2}} ||N_A||_\infty  + 4C \sqrt {x} )^2} , t_{1, \E}  \right ) .$$
The function $x \to t_\E(x) $ is indeed non increasing. 
\end{proof} 
\subsection{Local time $H^m$ and $W^{1, \infty}$ estimates} 
To get $H^m$ estimates, uniform in time, we will argue by induction in  taking consecutive derivatives of the integral formula (\ref{eq:oseen_rep}), which comes back to the issue of switching 
integrals and derivatives. The first lemma of this section is the basis to justify the first switching, then initializing the induction. 
\BL   \label{lem:v1det} Recall that $V_1 (t) = || D \uv (t, \cdot ) ||_{0, \infty}$. Then, $\forall \, t \in [0, t_\E ( W(0))]$, 
\BEQ  \label{eq:v1det} V_1(t) \le C\nu^{- { 3 \over 2}} \E^{- { 5 \over 3}} \en \left [ \E^{- { 1 \over 3}}\en + 1 \right ] \sqrt t.\EEQ 
\EL 
\begin{proof} We can apply to this case the inequality (2.14) page 213 in Leray \cite{Ler1934}, which yields: $\forall \, t \in  [0, t_\E(W(0))]$, $\forall \, \x, {\bf y} \in \R^3$,
\BEQ | \uv (t, \x) - \uv (t, \yv) | \le C  | \x -\yv |^{1 \over 2}   \int_0^t { || ( \um \otimes \uv ) (t', \cdot)||_{0, \infty} + || \overline{A \g \um } (t', \cdot) ||_{0, \infty} \over [ \nu (t-t') ]^{3 \over 4} } dt'.
\EEQ
Hence, by (\ref{eq:V(t)_first}), (\ref{eq:est1}) and  (\ref{eq:est2}),
\BEQ  | \uv (t, \x) - \uv (t, \yv) | \le C \nu^{-{3 \over 4} } \E^{- { 5 \over 3}} \en \left [ \E^{- { 1 \over 3}}\en + 1 \right ]  | \x -\yv |^{1 \over 2} t^{1 \over 4} . \EEQ
Therefore, the inequality (2.16) page 214 in Leray \cite{Ler1934} leads to 
\BEQ V_1(t) \le C\nu^{- { 3 \over 2}} \E^{- { 5 \over 3}} \en \left [ \E^{- { 1 \over 3}}\en + 1 \right ]  \int_0^t { (t')^{1 \over 4}  \over (t-t')^{3 \over 4}} dt',
\EEQ
which gives (\ref{eq:v1det}).  
\end{proof} 
This result allows us to prove:
\BL \label{lem:local_disspipation} There exists $t^{(1)}_\E (W(0))$ such that $\forall \, t \in [0, t^{(1)}_\E (W(0))]$, 
\BEQ  \label{eq:estimate_J_V1} J(t) \le 2 C \E^{-1} \sqrt{W(0)}, \quad V_1(t) \le 2 C \E^{-{5 \over 2} } \sqrt{W(0)}. \EEQ
Moreover, the function $x \to t^{(1)}_\E (x)$ is non increasing. 
\EL 
\begin{proof} 
Let $t \le t_\E (W(0))$. 
On one hand we have 
\BEQ \forall \, \alpha = (\alpha_1, \alpha_2, \alpha_3) \in \N^3, \quad D^\alpha (Q \star \overline{\uv_0} ) = Q \star D^\alpha  \overline{\uv_0}. \EEQ
On a second hand,  we deduce from (\ref{eq:conv_ineq}), (\ref{eq:w(0)_first}), (\ref{eq:V(t)_first})  and the estimate (\ref{eq:v1det}) in Lemma \ref{lem:v1det} above, that 
$$  \g {\bf X} \in L^\infty ( [0, t^{(1)}_\E (W(0))] \times \R^3)^{27} \cap L^\infty([0, t^{(1)}_\E (W(0))], L^2(\R^3)^{27}). $$
This information combined with the estimates (\ref{eq:estimate_oseen_m}) and (\ref{eq:calcul_integral}), leads to: $ \forall \, t \in [0, t^{(1)}_\E (W(0))]$,  
\BEQ\label{eq:gradient_oseen} \g \uv(t, \x)  = Q \star \g \overline{\uv_0} + \int_0^t \inte \g {\bf T} (t-t', \x - \yv) : \g \, {\bf X} (t', \yv) \, d \yv dt'.\EEQ
The  reasoning is very close to that used in step ii) of Lemma \ref{lem:switch_integral}'s proof so that it is not necessary to repeat it. 
From (\ref{eq:gradient_oseen}), by combining Young's inequality and (\ref{eq:calcul_integral}) once again, we have
 \BEQ \label{eq:bound_wt} J(t) \le || \g \overline {\uv_0}||_{0, 2}  + C \int_0^t { dt' \over \sqrt { \nu (t-t') }} 
 \left ( || \g (\um (t, \cdot) \otimes \uv(t, \cdot) ) ||_{0, 2} + || \g \overline {A \g \um } (t, \cdot) ||_{0, 2}  \right ).  
 \EEQ
It remains to evaluate each term in the r.h.s of (\ref{eq:bound_wt}) one after another, in terms of $J(t)$, without calling on (\ref{eq:v1det}), that have served only to get (\ref{eq:gradient_oseen}). 
To begin with, notice that 
 \BEQ \label{eq:DI_grad}  || \g \overline {\uv_0}||_{0, 2}  \le C \E^{-1} \sqrt {W(0)}, \EEQ
Furthermore,  
$$\begin{array}{lll} || \g (\um (t, \cdot) \otimes \uv(t, \cdot) ) ||_{0, 2}&  \le  & || \g \um (t, \cdot)  ||_{0, \infty} || \uv(t, \cdot)  ||_{0, 2} + || \um (t, \cdot) ||_{0, \infty} J(t) \\~ \\
& \le & \zoom C \E^{-3/2} \sqrt {W(t)} J(t) \le C \E^{-3/2} \sqrt {W(0)} J(t),
\end{array} 
$$
by using  $W(t) \le 4 W(0)$ since  $t \le t_\E (W(0))$. 
Similarly, for the same reason,  we also have 
$$ || \g \overline {A \g \um } (t, \cdot) ||_{0, 2} \le C \E^{-2} || N_A ||_\infty \sqrt {W(0)}. $$
Therefore we get 
\BEQ  \label{eq:j(t)bound} J(t) \le C \E^{-1} \sqrt {W(0)} \left [ 1 +   \int_0^t { \E^{-1}  || N_A ||_\infty + \E^{-1/2} J(t') \over \sqrt { \nu (t-t') }}  dt' \right ] . \EEQ
Using the V-maximum principle once again, we deduce from (\ref{eq:j(t)bound}) that 
\BEQ \forall \, t \in [0, t^{(1)}_\E (W(0))], \quad J(t) \le 2 C \E^{-1} \sqrt{W(0)}, \EEQ
where 
\BEQ \label{eq:temps2}  t^{(1)}_\E (x) = \inf \left ( { \nu \E^2 \over \left [ || N_A ||_\infty + 2 C \E^{-1/2} \sqrt {x } \right  ]  ^2 },  t_\E (x)  \right ). \EEQ
We  note that $x \to t^{(1)}_\E (x)$ is non increasing. 
Once $J(t)$ is under control, we control  $V_1(t)$ as  in the proof of Lemma \ref{lem:local_energy}.  The estimate 
(\ref{eq:estimate_J_V1}) is now uniform in time and substancially improves (\ref{eq:v1det}). 
\end{proof} 
This result yields by induction:
\BL \label{lem:regularity_m} Let $m \ge 1$. There exists $t^{(m)}_\E (W(0))$ such that $ \forall \, t \in [0, t^{(m)}_\E (W(0))]$,
\BEQ \label{eq:est_Hm} || \uv(t, \cdot) ||_{m,2} \le C_m \E^{-m} \sqrt{W(0)}, \quad || \uv(t, \cdot) ||_{m, \infty} \le C_m \E^{-{(m+ {3 \over 2})}} \sqrt{W(0)}. \EEQ
Moreover, the function $x \to  t^{(m)}_\E (x)$ is non increasing. 
\EL
\begin{proof} Let $m \ge 1$. The property is already proved in the case $m=1$ in  lemma  \ref{lem:local_disspipation}. Let $m \ge 2$. For the simplicity, we denote by $C$ 
the constants instead of $C_k$ ($k= 1, \cdots, m$). 

Assume by induction that for any $1 \le k \le m-1$ there exists $t^{(k)}_\E (W(0)) >0$ such that 
$$0 < t^{(m-1)}_\E (W(0))  \le \cdots \le t^{(k)}_\E (W(0)) \cdots \le  t_\E (W(0)), $$ and $\forall \, t \in [0, t^{(k)}_\E (W(0))]$, 
\BEQ \begin{array}{l}  || \uv(t, \cdot) ||_{k,2} \le C\E^{-k} \sqrt{W(0)}, \\|| \uv(t, \cdot) ||_{k, \infty} \le C \E^{-{\left (k+ {3 \over 2} \right )}} \sqrt{W(0)}, \end{array}\EEQ
and $ \forall \, 1 \le k \le m-1$, the function $x \to t^{(k)}_\E (x)$ is non increasing. We first derive a bound for $|| \uv (t, \cdot) ||_{m, 2}$. Before all, we note that:
\BEQ \label{eq:initial_m}   || D^m \overline {\uv_0} ||_{0, 2} \le C  \E^{-m} \en.\EEQ
Let $t \in [0, t^{(m-1)}_\E(\en)]$. The same arguments as those developed in the proof of Lemma \ref{lem:v1det} yield by induction 
to 
$$ V_m(t) \le \varphi(t),$$
where $t \to \varphi (t)$ is a continuous function. Let 
  $\alpha = (\alpha_1, \alpha_2, \alpha_3)$, $| \alpha | = m$. From this, we get 
  $$  D^\alpha {\bf X} \in L^\infty ( [0, t^{(1)}_\E (W(0))] \times \R^3)^9 \cap L^\infty([0, t^{(1)}_\E (W(0))], L^2(\R^3)^9), $$
which gives by arguments already  set out, 
  $$ D^\alpha \uv(t, \x)  = Q \star \g \overline{\uv_0} + \int_0^t \inte \g {\bf T} (t-t', \x - \yv) : D^\alpha {\bf X} (t', \yv) \, d \yv dt'.$$
  It remains to derive from the induction hypothesis a sharp estimate of $|| D^\alpha {\bf X} (t', \yv) ||_{0, 2}$ in order to use the V-maximum principle to control 
  $|| D^\alpha \uv (t', \yv) ||_{0, 2}$. 
According to Lemma \ref{lem:local_energy}, the choice of $t$ and usual results about convolution, we have 
\BEQ \label{eq:noyau_m_estimate}  || D^\alpha \overline{A \g \um } (t, \cdot ) ||_{0, 2} \le C || N_A ||_{\infty}  \E^{-(m+1)} \sqrt{W(t)} \le 2 C || N_A ||_{\infty}  \E^{-(m+1)} \en. \EEQ
Furthermore, the Leibnitz formula gives, 
\BEQ  D^\alpha (\um \otimes \uv) = \um \otimes D^\alpha \uv +  \sum_{\substack{ \beta = (p,q,r) \\ | \beta | < | \alpha | }} 
C_{\alpha_1}^p C_{\alpha_2}^q C_{\alpha_3}^r D^\beta \um \otimes D^{\alpha- \beta} \uv.  \EEQ
We deduce  from (\ref{eq:conv_ineq}) and Lemma \ref{lem:local_energy},
$$ || D^\alpha (\um \otimes \uv) ||_{0, 2} \le C \E^{-3/2} \en \left [  || D^\alpha \uv ||_{0,2} + \sum_{| \beta | < | \alpha |} \E^{- | \beta | }  || D^{\alpha- \beta} \uv ||_{0, 2} \right ] . $$
The induction hypothesis yields 
\BEQ \label{eq:nonlinear_m}   || D^\alpha (\um \otimes \uv) ||_{0, 2} \le C \E^{-3/2} \en \left [  || D^\alpha \uv ||_{0,2} +  m \E^{-2m} \en \right ]. \EEQ
Notice that we have not optimized things above, and this last estimate could be substancially improved. However, this is not essential. 
Now, by combining (\ref{eq:estimate_oseen_m}), (\ref{eq:calcul_integral}), (\ref{eq:initial_m}), (\ref{eq:noyau_m_estimate}) and (\ref{eq:nonlinear_m}) we obtain
\BEQ  \label{eq:Da_bound}\begin{array}{l}  || D^\alpha \uv (t, \cdot) ||_{0, 2}   \le  
C \sqrt {W(0)}  \left [ \E^{-m} +  \right. \\ ~ \\ \zoom \hskip 1,5cm
 \left. \int_0^t {\E^{-3/2}  || D^\alpha \uv (t', \cdot) ||_{0, 2}  + \E^{-(m+1)} || N_A ||_\infty + 
m \E^{-(2m + {3 \over 2})} \en \over \sqrt { \nu (t-t') }}  dt'  \right ]  . \end{array} \EEQ
By the V-maximum principle, we deduce from (\ref{eq:Da_bound}) that 
\BEQ \forall \, t \in [0, t^{(m)}_\E (W(0))], \quad || D^\alpha \uv (t, \cdot) ||_{0, 2} \le 2 C \E^{-m} \sqrt{W(0)}, \EEQ
where 
\BEQ \label{eq:tempsm}  t^{(m)}_\E (x) = \inf \left ( { \nu \E^2 \over \left [ || N_A ||_\infty +  C \E^{-1/2}(1+m \E^{-m}) \sqrt {x } \right ]  ^2 },  t^{(m-1)}_\E (x) \right ), \EEQ
and $x \to t^{(m)}_\E (x) $ is clearly non increasing. By a similar process, we also found the  bound for $ V_m(t)$, therefore  $|| \uv (t, \cdot) ||_{m, \infty}$. 
\end{proof} 
\begin{Remark} We  observe that $t^{(m)}_\E(x) \to 0$ as $\E \to 0$, for fixed $x$ and $m$. Moreover, the only estimate which does not blow up when $\E \to 0$, is the estimate (\ref{eq:V(t)_first}) 
about $W(t)$. This is in coherence with all former known results about the Navier-Stokes equations. 
\end{Remark} 
In the following, we set
$$I_{\E, m} = [0, t^{(m)}_\E (W(0))] . $$
\subsection{Energy balance and transition from local to global time}  
The transition from local to global time is based on the energy balance, which remains to be justifed. To do so, we must find additional estimates 
about the pressure to check that it satisfied right integrability conditions. The pressure satisfies the elliptic equation: 
\BEQ \label{eq:eq_poisson_presion} \Delta p = \div[ \div (- \um \otimes \uv + \overline{ A \g \um } )]. 
\EEQ
Therefore, we can write:
\BEQ p (t, \x) = {1 \over 4 \pi} \inte \g^2 \left ( 1 \over r \right ) [- \um \otimes \uv + \overline{ A \g \um }](t, \yv) d \yv, \EEQ
where $\g^2 = (\p_i \p_j )_{1 \le i,j \le 3}$, $r = | \x-\yv|$. 
This expression must be understood as a singular integral operator, with a $\delta$-function for $i=j$. 
By the Calder\`on-Zigmund Theorem (see in E. Stein  \cite{ES70} and also in P. Galdi \cite{GG00}, chapters 2 and 3) and Lemma \ref{lem:local_energy}, we see that $p \in C([0, t_\E( W(0)) ], L^2(\R^3))$ and the 
$L^2$ bound is uniform in $t$. From this,  
the differential quotient method due to L. Nirenberg \cite{LN59} and the standard elliptic theory (see also in Brezis \cite{brezis}, section IX.6), combined with 
Lemma \ref{lem:regularity_m} allows to write by induction that 
$\forall \, m \ge 0$, $\forall \, t \in I_{\E, m+2}$, 
\BEQ || p (t, \cdot) ||_{m, 2} \le C(W(0), || N_A ||_\infty, \E, m). \EEQ
Having said that, we can be more specific:
\BL \label{lem:pression} Let $m\ge 4$, $t \in  I_{\E, m+4}$. Then 
\BEQ \label{eq:esti_pressure}   || p (t, \cdot) ||_{m, 2} \le C_m \E^{-m} \en \left [ \E^{-1} || N_A ||_\infty + \E^{-m } \en \right ], \EEQ
where we recall that $N_A (t) = || A(t, \cdot ) ||_{0, \infty}$. \EL
\begin{proof}  Let $t \in I_{\E, m+4}$, $\alpha = (\alpha_1, \alpha_2, \alpha_3)$ such that $| \alpha | \le m$. As $H^4(\R^3) \hookrightarrow C^2(\R^3)$,   then
$D^\alpha \uv (t, \cdot)$, $D^\alpha p(t, \cdot) \in C^2(\R^3)$, which allows to derive the regularized Navier-Stokes equations up to order $\alpha$. In particular, we have  at time $t$: 
\BEQ \label{eq:NSE_derived} \p_t D^\alpha \uv - \nu \Delta D^\alpha \uv + \g D^\alpha p = -D^\alpha (\um \otimes \uv ) + D^\alpha (\overline{A \g \um} ). \EEQ
By consequences, from $\div D^\alpha \uv = 0$ we get, 
 \BEQ \label{eq:pression_Dalpha} D^\alpha p (t, \x) = 
 {1 \over 4 \pi}   \int_{\R^3} \g^2 \left ( {1 \over r} \right ) [ D^\alpha (\um \otimes \uv ) (t, \yv) - D^\alpha (\overline{A \g \um} ) (t, \yv)] d\yv .\EEQ
 As $m \ge 4$, $H^m(\R^3)$ is an algebra, hence by (\ref{eq:est_Hm})
\BEQ || (\um \otimes \uv )(t, \cdot) ||_{m, 2} \le C || \um(t, \cdot) ||_{m,2} || \uv(t, \cdot) ||_{m, 2} \le C_m \E^{-2m} W(0). \EEQ
Therefore, combining this inequality with (\ref{eq:noyau_m_estimate}) and Calder\`on-Zigmung Theorem, we obtain
\BEQ \label{eq:non_linear_est}  \begin{array}{ll}  || D^\alpha p (t, \cdot)  ||_{0, 2}  &  \le C(|| D^\alpha (\um \otimes \uv )(t, \cdot) ||_{0, 2} +  || D^\alpha \overline{A  \g \um}(t, \cdot) ||_{0, 2} ) \\~ \\
&\zoom \le  C_m \E^{-m} \en \left [\E^{-m } \en +  \E^{-1} || N_A ||_\infty  \right ]  =P_{m, \E}. \phantom{\int_0^{e}} \end{array} \EEQ
Therefore, $p (t, \cdot) \in H^m(\R^3)$ for all $t \in I_{\E, m+4}$ and 
\BEQ \label{eq:estimer_la_pression}  || p (t, \cdot) ||_{m, 2} \le m P_{m, \E}, \EEQ
giving (\ref{eq:esti_pressure}). 
\end{proof} 
\BL \label{lem:energy} Let $m \ge 4$. Then $\uv \in C(I_{\E, m+4}, H^m(\R^3)^3)$ and  for all $\alpha$ such that $| \alpha | \le m$, the following energy balance holds: 
\BEQ \label{eq:balance_D_alpha} {1 \over 2} \inte | D^\alpha \uv (t, \x) |^2 d\x = {1 \over 2} \inte | D^\alpha \overline {\uv_0} (\x) |^2 d\x + \int_0^t \left [ I_1(t')  + I_2(t')  + I_3(t') \right ] dt',  
\EEQ
where 
$$ \left \{ \begin{array}{l} \zoom  I_1 (t') = - \inte D^\alpha (\um \otimes \uv )(t', \x) : \g D^\alpha (t', \x) d\x, \\~ \\
\zoom I_2 (t') = -\nu \inte | \g D^\alpha u (t', \x) |^2 d\x, \\~ \\
\zoom I_3 (t') = - \inte D^\alpha \overline{ A \g \um} (t', \x) \cdot \g D^\alpha \uv (t', \x) d\x.
\end{array} \right. 
$$
In particular, we have $\forall \, t \in  I_{\E, m+4}$, 
 \BEQ \label{eq:energy_balanc_2}  {1 \over 2} W(t) + \nu \int_0^t J^2(t') dt' + \int_0^t  K_{A,\E}^2(t') dt' = {1 \over 2} W_\E(0), \EEQ 
 for all $t \in [0, T[$, where $W(t)$ and $J(t)$ were initially defined by (\ref{eq:w(t)def}) and (\ref{eq:jdet}), and $K_{A, \E}$ by (\ref{eq:KAEpsilon}).
 \EL
\begin{proof} We deduce from the equation (\ref{eq:NSE_derived}),  
\BEQ \label{eq:NSE_eps_Dalpha} 
D^\alpha (\p_t  \uv) =- \div (D^\alpha (\um \otimes \uv ))  + \nu \Delta D^\alpha \uv +   \div \,  D^\alpha \overline{ A \g \um} - \g D^\alpha p.  
\EEQ
Therefore, by (\ref{eq:est_Hm}) and 
(\ref{eq:esti_pressure}), we get 
\BEQ \begin{array} {l}  ||  D^\alpha (\p_t  \uv) (t, \cdot) ||_{0, 2} \le \\ 
\hskip 2,5cm C_{m+1}\E^{-(m+1)} \en \left [  \en  (1+\E^{-m}) + \E^{-1} (\nu + || N_A ||_\infty) \right ]. \end{array} \EEQ
By consequence, $\uv, \p_t \uv \in L^\infty(I_{\E, m+4}, H^m(\R^3)^3) \subset L^2(I_{\E, m+4}, H^m(\R^3)^3)$. Therefore,  by a well known result of functional analysis 
(see in Temam \cite{RT01} for example), $\uv \in C(I_{\E, m+4}, H^m(\R^3)^3)$ and 
$$ (\p_t \uv(t, \cdot) , \uv(t, \cdot))_m =  {d \over 2 dt} || \uv (t, \cdot) ||_{m, 2}^2.$$
Following the usual process, we form the dot product of the equation (\ref{eq:NSE_eps_Dalpha}) with $D^\alpha \uv$. We integrate over $\R^3$ by using the Stokes formula, which is 
possible by the integrabilty properties of $D^\alpha \uv$ and $D^\alpha p$. In particular, since $\div D^\alpha \uv = 0$, we get 
$$ (\g D^\alpha p, D^\alpha \uv) = 0.$$
We get the energy balance (\ref{eq:balance_D_alpha}) after integrating in time over $[ 0, t]$. 
In the special case $\alpha = 0$, we obtain $(\ref{eq:energy_balanc_2})$ by noting  that in addition:
$  ( \um \cdot \g \uv, \uv) = 0 $. 
\end{proof} 
\BTHM \label{thm:regularity_result} Let $T >0$. Then\footnote{we take $m \ge 4$ to be in coherence with the arguments set out in Lemma \ref{lem:energy}. However, what can do more can do less, and the result holds for $m = 0, 1, 2, 3$.} $\forall \, m \ge 4$, $(\uv, p) \in C([0,T], H^m(\R^3) \times H^m(\R^3))$, the energy balance (\ref{eq:energy_balanc_2}) holds for all $t \in [0,T]$ as well as 
the estimates (\ref{eq:est_Hm}) and (\ref{eq:esti_pressure}). 
\ETHM
\begin{proof}  The energy balance (\ref{eq:energy_balanc_2}) combined with the inequality (\ref{eq:w(o)}),  shows that  for all $t \in I_{\E, m+4}$, we have $W(t) \le W(0)$, improving substancially the estimate (\ref{eq:w(0)_first}). 
In particular we can write
\BEQ W( \delta t_\E^{(m)}(W(0)) \le W(0), \quad  \hbox{where} \quad  \delta t_\E^{(m)}(x) = t^{(m+4)}_\E(x).
\EEQ 
Let $(t_{n, \E}^{(m)})_{n \ge 1}$ be the sequence given by 
\BEQ \label{eq:rec} t_{1, \E} ^{(m)} = \delta t_\E^{(m)}(W(0)) , \quad t_{n, \E}^{(m)} =  t_{n-1, \E}^{(m)} + \delta t_\E^{(m)} (W(t_{n-1, \E} )). \EEQ
Assume that for a given $n$, $t_{n, \E}^{(m)}$ is constructed such that $u \in C( [0, t_{n, \E}^{(m)}], H^m(\R^3))$, which holds when $n = 1$ by Lemma \ref{lem:regularity_m} since $t^{(m+4)}_\E \le t^{(m)}_\E$. In particular the energy balance holds over 
$[0, t_{n, \E}^{(m)}]$, and $W(t_{n, \E}^{(m)}) \le W(0)$, which yields $\delta t_\E^{(m)} (W(t_{n, \E}^{(m)})) \ge \delta t_\E^{(m)} (W(0))$ by the decrease of the function
$x \to  t^{(m+4)}_\E(x)$. Therefore, we can reproduce 
the arguments of the section \ref{sec:loc_time_est} and  the lemma \ref{lem:pression}  and \ref{lem:energy} from the time $t_{n, \E}^{(m)}$, by the continuity in time, thus validating the iteration $n+1$ of (\ref{eq:rec}) by the inductive hypothesis. In particular we have
$$ t_{n+1, \E}^{(m)}= \sum_{k=1} ^n \delta t_\E^{(m)} (W(t_{k, \E} ^{(m)}))\ge n \delta t_\E^{(m)}(W(0)) , $$
which is larger than $T$ for $n$ large enough, concluding the proof.  \end{proof} 
\begin{Remark} The estimates obtained in this section prove that for any $m \ge 0$, 
\BEQ || \uv(t, \cdot) - \overline {\uv_0}  ||_{m, 2} \le C_m \E^{-(2m + {3 \over 2}) } \sqrt { t \over \nu}. \EEQ
\end{Remark} 
\begin{Remark} The continuity in time ensures  that any regular  solution to the regularized NSE (\ref{eq:NSE_eps}) satisfies the following semi-group like property: 
   $\forall \, t_0 \in ]0, T[$, $\forall \, t \in [t_0, T[$, 
 \BEQ \label{eq:oseen_rep_2}   \left \{ \begin{array} {l} \uv (t, \x) =  (Q \, \star \,    \uv(t_0, \cdot)) (t, \x)  \, \\~ \\ \zoom 
 \zoom  + \int_{t_0}^t \inte \g {\bf T} (t-t', \x - \yv) : [\um (t', \yv) \otimes \uv (t', \yv) - \overline{A\g \um }(t', \yv)  ] \, d \yv dt'.  \end{array} \right. \EEQ
 This what is implicitely used in the proof of Theorem \ref{thm:regularity_result}. 
\end{Remark}
Once the regularity result of Theorem \ref{thm:regularity_result} is established, following the standard routine yields the  uniqueness result:
\BTHM \label{thm:uniqueness} The regularized NSE  (\ref{eq:NSE_eps}) have at the most one regular soution $(\uv, p)$.  
\ETHM

\section{Existence of solution for the regularized  NSE}
The aim of this section is the proof of the existence result stated in Theorem \ref{thm:exist_approch}. The solution of the regularized NSE (\ref{eq:NSE_eps}) is constructed by a standard Picard iteration process based on the Oseen integral representation. 
\subsection{Iterations} 
Let us put 
\BEQ \uv_1 (t, \x) =  (Q \, \star \,   \overline{ \uv_0}) (t, \x),  \EEQ
and for all $n >0$, 
\BEQ \label{eq:picard_process}   \left \{ \begin{array} {l} \uv_{n+1} (t, \x) =  (Q \, \star \,   \overline{ \uv_0}) (t, \x)  \, \\~ \\ \zoom 
 \zoom  + \int_0^t \inte \g {\bf T} (t-t', \x - \yv) : [(\um_n \otimes \uv_n)(t', \yv) - \overline{A\g \um_n } (t', \yv) ] \, d \yv dt'.  \end{array} \right. \EEQ
The first result of this section aims to check that the sequence $(\uv_n)_{n \in \N}$ makes sense and to get estimates about $\uv_n$, similar to the estimates 
(\ref{eq:est_Hm}). 
\BL  for all $m\ge 0$,  
 $n \ge 1$,   $\uv_n \in C(I_{\E, m}, H^m(\R^3))^3$, where $I_{\E, m} = [0, t^{(m)}_\E(W(0))]$, $t^{(m)}_\E(x)$ is given by (\ref{eq:tempsm}). Moreover, 
 we have, $\forall \, m \ge 0$, $\forall \, t \in [0, t^{(m)}_\E(W(0))]$, $\forall \, n \in \N$:
 \BEQ \label{eq:Vmnt}  || \uv_n(t, \cdot) ||_{m,2} \le C_m \E^{-m} \sqrt{W(0)}, \quad || \uv_n(t, \cdot) ||_{m, \infty} \le C_m \E^{-{(m+ {3 \over 2})}} \sqrt{W(0)}.\EEQ 
\EL
\begin{proof}
By recycling the proof of Lemma \ref{lem:local_energy}, we get the inequality 
\BEQ \label{eq:volt_energy_ite} \sqrt {W_{n+1} (t) } \le \sqrt {W(0)}  + C \E^{-1} \int_0^t { \E^{- 1 /2} W_n(t') + \sqrt {W_n(t')}   \over \sqrt { \nu (t-t') }} dt'.\EEQ 
A straightforward inductive  reasonning yields
\BEQ \forall \, t \in [0, t_\E(W(0))], \quad \forall n \in \N, \quad W_n(t) \le 4 W(t), \EEQ
where $t_\E(x)$ is specified by the formula (\ref{eq:temps1}), in which we take $T = \infty$. Following the proofs of Lemma \ref{lem:local_disspipation} and \ref{lem:regularity_m}
we obtain (\ref{eq:Vmnt}).  We skip the details. The continuity in time is obtained in the same way as in the item 2) of the proof of Lemma \ref{lem:switch_integral}, based on  $\g  {\bf T} \in L^1_{t, \x}$.  \end{proof}
\BL \label{lem:reg_un}  Let $m \ge 4$, $n \ge 0$. There exists $p_{n+1} \in C(I_{\E, m+4}, H^m(\R^3))$ such that $(\uv_{n+1}, p_{n+1})$ satisfies over 
$I_{\E, m+4} \times \R^3$ the evolutionary Stokes equation: 
\BEQ \label{eq:ite_Stokes_pb_n} \left \{  \begin{array} {l} \p_t \uv_{n+1} - \nu \Delta \uv_{n+1} + \g p_{n+1} = - \div ( \um_n  \otimes  \uv_n - \overline{A \g \uv_n}),  \\
\div  \, \uv_{n+1} = 0,   \\ 
\uv_{n+1}\vert _{t = 0} = \overline{ \uv_0}. 
\end{array} \right.  \EEQ
\EL
\begin{proof} Let ${\bf X}_n =   \um_n  \otimes  \uv_n - \overline{A \g \uv_n}$, and consider the evolutionary Stokes problem
\BEQ \label{eq:ite_Stokes_pb_transfer} \left \{  \begin{array} {l} \p_t \vv - \nu \Delta \vv + \g p_{n+1}  = - \div {\bf X}_n , \\
\div  \, \vv= 0,\\ 
\vv \vert _{t = 0} = \overline{ \uv_0}. 
\end{array} \right.  \EEQ
According to Lemma \ref{lem:reg_un} and because $m \ge 4$, we have at least $ {\bf X}_n \in L^2 (I_{\E, m+4}, H^3(\R^3))$. Then by Theorem 1.1 and Proposition 1.2 in Chapter 3 in Temam 
\cite{RT01}, we know the existence of a unique weak solution to the Stokes problem (\ref{eq:ite_Stokes_pb_transfer}) such that 
$$ \p_t \uv \in L^2 (I_{\E, m+4}, H^3(\R^3)), \quad \uv \in  L^2 (I_{\E, m+4}, H^5(\R^3)), \quad p_{n+1} \in L^2 (I_{\E, m+4}, H^4(\R^3)),$$
which is constructed by the Galerkin method. Therefore, the conditions for the application of Lemma 8 in Leray \cite{Ler1934}  are fulfiled, and 
\BEQ \label{eq:picard_process_equation}   \left \{ \begin{array} {l} \vv (t, \x) =  (Q \, \star \,   \overline{ \uv_0}) (t, \x)  \, \\~ \\ \zoom 
 \zoom  + \int_0^t \inte \g  {\bf T} (t-t', \x - \yv) : [(\um_n \otimes \uv_n)(t', \yv) - \overline{A\g \um_n } (t', \yv) ] \, d \yv dt',  \end{array} \right. \EEQ
 hence $\vv = \uv_{n+1}$ because the solution of (\ref{eq:ite_Stokes_pb_transfer}) is unique. From there, the regularity of $p_{n+1}$ is obtained by the same argument as in the proof of Lemma \ref{eq:esti_pressure}. 
\end{proof} 
\subsection{Contraction property} 
 In what follows, we set
\begin{eqnarray} && W_n (t) = \inte | \uv_n (t, \x) |^2 d\x = || \uv_n (t, \cdot) ||_{0, 2}^2, \\
&& V_n(t) = || \uv_n (t, \cdot) ||_{0, \infty}, \\ 
&& J_n (t) = || \g \uv_n (t, \cdot) ||_{0, 2}, \zoom \phantom{\int_0^1} \\
&& \label{eq:v_m_n} \zoom V_{m,n} (t) = \sup_{\x \in \R^3} | D^m\uv (t, \x) |  = || D^m\uv (t, \cdot) ||_{0, \infty}. \phantom{int_0^{e^x}} 
\end{eqnarray}
We prove in this section  that there exists  $\tau^{(m)}_\E(W(0)) >0$ such that the sequence $\suite \uv n$ 
satisfies a contraction property over $[0, \tau^{(m)}_\E(W(0))]$. \smallskip
Given any time $\tau >0$, we equip the space $C([0, T], H^m(\R^3))$ with the natural uniform norm 
\BEQ || w ||_{ \tau; m, 2} = \sup_{t \in [0, \tau]} || w(t, \cdot) ||_{m, 2}, \EEQ
making $C([0, T], H^m(\R^3))$ a Banach space. We show in this subsection the following 
\BL \label{lem:contraction} For all $m \ge 0$, there exists a time $\tau^{(m)}_\E(W(0))$ such that 
\BEQ \label{lem:contraction_Lemma} \forall \, n \in \N, \quad || \uv_{n+1} - \uv_n ||_{\tau^{(m)}_\E(W(0)); m, 2} \le {1 \over 2}  || \uv_{n} - \uv_{n-1} ||_{\tau^{(m)}_\E(W(0)); m, 2}. \EEQ
Moreover, the function $x \to \tau^{(m)}_\E(x)$ is non increasing. 
\EL
\begin{proof} Let $m \ge 0$. From now, we are working on the time interval $I_{\E, m+4} = [0, t^{(m+4)}_\E(W(0))]$. Let $n\ge1$, $\alpha = (\alpha_1, \alpha_2, \alpha_3)$ such that 
$| \alpha | \le m$, and $w_{n,\alpha}(t)$ defined by
\BEQ w_{n,\alpha}(t) =  || D^\alpha \uv_n(t, \cdot) - D^\alpha \uv_{n-1} (t, \cdot) ||_{0, 2}. \EEQ
We first evaluate $w_{n+1,0}(t)$ in terms of $w_{n,0}(t)$. To do so, 
let 
$$ \Delta_{n, 0} (t') =  || (\um_n \otimes \uv_n) (t', \cdot) - (\um_{n-1} \otimes \uv_{n-1}) (t', \cdot) ||_{0,2},$$
and
observe that at any time 
$t'$, we have
\BEQ \begin{array}{l}  \Delta_{n, 0} (t')    \le 
|| \um_n (t', \cdot) ||_{0, \infty} w_{n,0}(t') + || \uv_{n-1} (t', \cdot) ||_{0, \infty} || (\um_n - \um_{n-1})(t', \cdot) ||_{0, 2},
\end{array} 
\EEQ
which leads by (\ref{eq:Vmnt}) to 
\BEQ \label{eq:Delta1} \Delta_{n, 0} (t')  \le C \en w_{n,0}(t'). \EEQ
Similarly, we also have 
\BEQ \label{eq:Ag1} || \overline{A\g \um_n} (t', \cdot) - \overline{A\g \um_{n-1} } (t', \cdot) ||_{0, 2} \le C || N_A ||_\infty \E^{-1} w_{n,0}(t'). \EEQ
Inequalities (\ref{eq:Delta1}) and (\ref{eq:Ag1}) combined with the relation (\ref{eq:picard_process}) and arguments used many times before 
lead to 
\BEQ w_{n+1,0}(t) \le C  (  \en + \E^{-1} || N_A ||_\infty  ) \int_0^t {w_{n,0}(t') \over \sqrt{t-t'} } dt'. \EEQ
The same procedure leads to:   $\forall \, \alpha = (\alpha_1, \alpha_2, \alpha_3)$ such that $ | \alpha | \le m$,
\BEQ \label{eq:wn+1alpha} w_{n+1,\alpha}(t) \le 
C_{n, \alpha, \E} (\en+ || N_A ||_\infty) \int_0^t { || \uv_n(t', \cdot) - \uv_{n-1} (t', \cdot ) ||_{m, 2} \over \sqrt {\nu (t-t')} } dt'. 
\EEQ
To see this, we must first estimate $\Delta_{n, \alpha} (t')$, where
$$ \Delta_{n, \alpha} (t') =  || D^\alpha (\um_n \otimes \uv_n) (t', \cdot) - D^\alpha (\um_{n-1} \otimes \uv_{n-1}) (t', \cdot) ||_{0,2}.$$
By the Leibnitz formula, we have
$$\begin{array}{l} D^\alpha (\um_n \otimes \uv_n) -   D^\alpha (\um_{n-1} \otimes \uv_{n-1}) =  \\~ \\  \zoom 
\hskip 2cm \sum_{\substack{ \beta = (p,q,r) \\ | \beta | \le | \alpha | }} 
C_{\alpha_1}^p C_{\alpha_2}^q C_{\alpha_3}^r (D^\beta \um_{n+1} \otimes D^{\alpha- \beta} \uv_{n+1} -  
D^\beta \um_{n} \otimes D^{\alpha- \beta} \uv_{n}). \end{array} $$
We deduce from inequality (\ref{eq:Vmnt}) that
\BEQ \label{eq:dalphabeta}  \begin{array}{l}  || (D^\beta \um_{n+1} \otimes D^{\alpha  - \beta} \uv_{n+1}) (t', \cdot)-  
(D^\beta \um_{n} \otimes D^{\alpha- \beta} \uv_{n} ) (t', \cdot) ||_{0, 2} \le \\~ \\  \zoom 
\hskip 3cm 
\en \left [ C_{\beta} \E^{- |\beta|} w_{n, \alpha-\beta} (t') + C_{ \alpha - \beta } \E^{- (| \alpha| - |\beta |)} w_{n, \beta} (t') \right ] . \end{array} \EEQ
Furthermore, 
\BEQ \label{eq:dalphaA}  || D^\alpha \overline{A\g \um_n} (t', \cdot) - D^\alpha \overline{A\g \um_{n-1} } (t', \cdot) ||_{0, 2} \le C || N_A ||_\infty \E^{-(|\alpha|+1)}  w_{n,0}(t'). \EEQ
By noting that $\forall \, \beta$ s.t. $| \beta | \le m$,
$$ w_{n, \beta} (t') \le || \uv_n (t', \cdot) - \uv_{n-1}  (t', \cdot) ||_{m, 2}, $$
we get (\ref{eq:wn+1alpha}) by  combining (\ref{eq:dalphabeta}), (\ref{eq:dalphaA}) and (\ref{eq:picard_process}).   
\smallskip

Summing (\ref{eq:wn+1alpha}) over $\alpha$ for $0 \le | \alpha | \le m$, yields for all $t \in I_{\E, m}$,
$$  \begin{array}{l}  || \uv_{n+1} (t, \cdot) - \uv_{n} (t, \cdot ) ||_{m, 2} \le \\ 
\zoom \hskip 3cm C_{n, \alpha, \E} (\en+ || N_A ||_\infty) \int_0^t { || \uv_n(t', \cdot) - \uv_{n-1} (t', \cdot ) ||_{m, 2} \over \sqrt {\nu (t-t')} } dt', \end{array} 
$$
hence $\forall \, \tau \in I_{\E, m}$, $\forall \, t \in [0, \tau]$, 
$$  \begin{array}{l}  || \uv_{n+1} (t, \cdot) - \uv_{n} (t, \cdot ) ||_{m, 2} \le \\ 
\zoom \hskip 3cm C_{n, \alpha, \E} (\en+ || N_A ||_\infty) \sup_{t' \in [0, \tau]}  || \uv_n(t', \cdot) - \uv_{n-1} (t', \cdot ) ||_{m, 2} \sqrt{\tau \over \nu}. \end{array} 
$$
Consequently, (\ref{lem:contraction_Lemma}) holds by taking 
\BEQ \tau_{\E}^{(m)} (x) =\inf\left (  {\nu \over 2 C_{n, \alpha, \E} (\sqrt x+ || N_A ||_\infty)^2}, t_{m+4, \E}(x) \right ). 
\EEQ
which is indeed a non increasing function of $x$. \end{proof} 
\subsection{Concluding proof}
We are now capable of proving Theorem \ref{thm:exist_approch}. Let $m \ge 4$, so that $H^m (\R^3) \hookrightarrow C^2(\R^3)$.  For the simplicity we write $\tau$ instead of $\tau_\E^{(m)} (W(0))$.  Lemma \ref{lem:contraction} shows that the sequence 
$\suite \uv n$ converges to some $\uv$ in $C([0, \tau], H^m(\R^3)^3)$. We aim to prove that $\uv$ satisfies the Oseen integral relation (\ref{eq:oseen_rep}). 
Let $t \in [0, \tau]$, and consider
\BEQ \begin{array}{l} \zoom v_n(t, \x; t') = \inte \g{\bf T} (t-t', \x - \yv) : [(\um_n \otimes \uv_n)(t', \yv) - \overline{A\g \um_n } (t', \yv) ] \, d \yv, \\
\zoom v(t, \x; t') = \inte \g {\bf T} (t-t', \x - \yv) : [(\um\otimes \uv)(t', \yv) - \overline{A\g \um } (t', \yv) ] \, d \yv. \end{array}  \EEQ
The inequality (\ref{eq:estimate_oseen_m}) leads to, for $t' < t$, 
$$ \begin{array}{l} \zoom  | v_n(t, \x; t') - v(t, \x; t') | \le \\~ \\  \zoom 
\hskip 2cm {C \over [\nu (t-t')]^2 } \inte \left [  | (\um_n \otimes \uv_n- \um\otimes \uv)(t', \yv)|   + | ( \overline{A\g \um_n } -  \overline{A\g \um })(t', \yv) | \right ] d\yv, \end{array} $$
which ensures, by the uniform convergence of $({\uv_n(t', \cdot)})_{n \in \N}$ to $\uv(t', \cdot)$ in  $H^m(\R^3)^3$, that for any 
fixed $(t, \x) \in [0, \tau] \times \R^3$, any $t' \in [0, t[$, 
$$ v_n(t, \x; t') \to v(t, \x; t') \quad \hbox{as} \quad n \to \infty. $$
Moreover, the inequality
\BEQ  | v_n(t, \x; t') | \le  \left ( V_{n,0} (t')^2 +|| N_A ||_\infty V_{n,1}(t')   \right )  || \g {\bf T}(t', \cdot) ||_{0, 1}, \EEQ
gives by (\ref{eq:calcul_integral}) and (\ref{eq:Vmnt}), 
$$ | v_n(t, \x; t') | \le { C \E^{-3/2}  [ W(0) + \E^{-1}  || N_A ||_\infty   ]  \over \sqrt {\nu (t-t')} } \in L^1([0, t]). $$
Therefore, Lebesgue's Theorem applies and we have 
$$ \int_0^t v_n(t, \x; t') dt' \to  \int_0^t v(t, \x; t') dt'  \qmbx{as} n \to \infty,$$
in other words, $\uv$ satisfies the integral relation (\ref{eq:oseen_rep}) and the regularity results of Section \ref{eq:Energy_balance} apply for $\uv$. By the same proof as that of Lemma 
\ref{lem:reg_un}, we see that there exists a scalar field $p$ such that 
$(\uv, p)$ is a regular solution to the  regularized NSE (\ref{eq:NSE_eps_intro}), over the time interval $[0, \tau_\E^{(m)}(W(0))]$. 
The transition from local to global time is like in  Theorem \ref{thm:regularity_result}'s proof, by the decrase of the function $x \to \tau_\E^{(m)}(x)$, the time continuity of the trajectories in $H^m$ and the energy balance. The proof of  Theorem \ref{thm:exist_approch} is now completed.  \hfill $\square$

\subsection{Behavior at infinity}
In order to take the limit in the regularized NSE when $\E \to 0$, we need to know how the kinetic energy of the velocity field $\uv$ behaves for large values of $| \x |$. From now, we assume that $A$ is of compact support uniformly in $t$, which means 
that there exists $R_0$ verifying: 
\BEQ \label{eq:support_compact} \forall \, \x \in \R^3 \,\,  \hbox{s.t.} \,\,  | \x | \ge R_0, \quad \forall \, t \ge 0, \quad A(t, \x) = 0. \EEQ
We prove in this subsection: 
\BL \label{lem:alinfini}  There exists a non increasing continous function of $t$, 
$\varphi = \varphi(t)$, such that for any $R_1>0$, $R_2>0$, $R_1 < R_2$, 
\BEQ  \label{eq:infinity} {1 \over 2} \int_{ | \x | \ge R_2}  | \uv (t, \x) |^2 d \x \le {1 \over 2} \int_{ | \x | \ge R_1}  | \uv_0(\x) |^2 d \x  +{1 \over R_2 - R_1} \varphi(t).    \EEQ
\EL 
\begin{proof}
Let $R_1$, $R_2$, $0 < R_1 < R_2$, and $f= f(\x)$ be the function defined by 
\BEQ  \begin{array}{ll} f(\x) = 0 & \hbox{if } | \x | \le R_1, \\
\zoom f(\x) = { | \x | - R_1 \over R_2 - R_1 } & \hbox{if }  R_1 \le | \x | \le R_2, \\
f(\x) = 1 & \hbox{if } | \x | \ge R_2. 
\end{array} 
\EEQ
Taking $f(\x) \uv(t, \x)$ as test in (\ref{eq:NSE_eps}.i) and integrating by parts by using $\div \, \uv = 0$,  yields at each time $t$, 
\BEQ \left \{ \begin{array}{l} \zoom  {1 \over 2} \inte f(\x) | \uv (t, \x) |^2 d \x + \nu \int_0^t \inte f(\x) | \g \uv (t', \x) |^2 d\x dt' = \\~ \\
\zoom {1 \over 2} \inte f(\x) | \overline{\uv_0} (\x) |^2 d \x -  \nu \int_0^t  \inte \g \uv (t', \x) \cdot \g f (\x) \cdot \uv (t', \x) \, d\x dt' + \\ ~ \\
\zoom \int_0^t  \inte \uv(t', \x) \cdot \g f(\x) \, p(t', \x) d\x dt' + {1 \over 2} \int_0^t  \inte  \um(t', \x) \cdot \g f(\x) \, | \uv(t', \x) |^2 d\x dt' - \\~ \\
\zoom - \int_0^t \inte A(t', \x) \g \um(t', \x) : \g (\overline{ f(\x) \uv(t', \x)) } d \x dt'
\end{array} \right. 
\EEQ
Taking $R_1 \ge R_0$ where $R_0$ is specified in (\ref{eq:support_compact}), leads to  
\BEQ  \label{eq:support_compact}  \int_0^t \inte A(t', \x) \g \um(t', \x) : \g (\overline{ f(\x) \uv(t', \x)) } d \x dt' = 0. \EEQ
From there, the calculations carried out in Leray \cite{Ler1934}, section 27, pages 232-234, can be recycled turnkey, and the conclusion follows. 
\end{proof}  
\begin{Remark} The assumption "$A$ is with compact support" is consistent with the idea that no turbulence occurs for large values of $| \x |$, which is in agreement 
with the results of Caffarelli-Kohn-Nirenberg \cite{CKN1982} about the singularitie's location  (if any)  of  NSE's solutions without eddy viscosity, so far we believe that 
turbulence and singularities are connected.  We conjecture that this is not needed, which is leaved as an open question.  
\end{Remark}

\section{Passing to the limit in the equations}\label{sec:passing_to_the_limit}

\subsection{Aim} 

from now, $\E>0$ being given,  $(\uv_\E, p_\E)$ denotes the solution to the regularized NSE (\ref{eq:NSE_eps}). The aim of this section is to show that we can extract from $(\uv_\E, p_\E)_{\E > 0}$
 a subsequence that converges to a turbulent solution of the NSE (\ref{eq:NSE}) (see Definition \ref{def:turb_sols}), which will prove Theorem \ref{thm:turb_exist}. 
 We follow roughly speaking the frame set out by J. Leray to pass to the limit. We have filled in the blanks,  refreshed and customized this frame by using modern tools of analysis, taking into account the eddy viscosity term that is not in Leray's work. 
 \smallskip
 
 Recall that the assumptions about $A$ are:
\begin{enumerate}[i)] 
\item $A \ge 0$ a.e in $\R_+ \times \R^3$,  
\item $A \in C_b(\R_+, W^{1, \infty}( \R^3))$, 
\item $A$ is with compact support in space uniformly in $t$, which means that there exists $R_0 >0$, such that 
$\forall \, t \ge 0$, $\forall \, \x \in \R^3$ s.t. $| \x | \ge R_0$, $A(t, \x) = 0$. 
\end{enumerate} 
We recall that the space of test vector fields $\hbox{E}_\sigma$ we are considering is given by
\BEQ \label{eq:definition_E_sigma} \begin{array}{l} \zoom \hbox{E}_\sigma = \left \{ \wv \in L^1_{loc} (\R_+, H^3(\R^3)^3) \quad \hbox{s.t.}  \quad \wv \in C(\R^+, L^2(\R^3)^3),  
\phantom{\p \wv \over \p t} 
\right. \\  \hskip 2cm  \zoom \left.
 \, \g \wv \in L^\infty (\R, C_b(\R^3)^3), \quad {\p \wv \over \p t} \in L^1_{loc} (\R_+, L^2(\R^3)^3), \quad \div \, \wv = 0 \right \}. 
 \end{array} \EEQ
 Let $\wv \in \hbox{E}_\sigma$. We form the scalar product of $\wv$ with the momentum equation (\ref{eq:NSE_eps}.i) and integrate by parts, which is legitimate  by the regularities of 
$\uv_\E$, $p_\E$ and $\wv$.  We get: 
\BEQ \label{eq:weak_form_epsilon} \left \{  \begin{array}{l} \zoom \inte \overline{\uv_0}(\x) \cdot \wv (0, \x) d\x  =   \inte \uv_\E (t, \x) \cdot \wv (t, \x) d\x  \\~ \\  \zoom 
-  \int_0^t \inte \uv_\E(t', \x) \cdot \left [ \nu \Delta \wv (t', \x) + \div (  \overline{A  \g \overline \wv}) (t', \x) ) + {\p \wv  \over \p t' } (t', \x) \right ]  d\x dt' \\~ \\ \zoom  + \int_0^t \inte [\overline{\uv_\E} (t', \x)  \otimes \uv_\E (t', \x)] : \g \wv(t', \x) \, d\x dt' ,  
\end{array}  \right. 
\EEQ
where also have used:
$$ \begin{array}{l} \zoom \inte \div (\overline {A \g \um}) \cdot \wv = -\inte  \overline {A \g \um} : \g \wv = -\inte A \g \um : \overline \wv = \\
\zoom  \hskip 8cm -\inte  \g \uv : \overline {A \overline \wv }= \inte  \uv \cdot \div ( \overline {A \overline \wv }). \end{array} $$
The goal is to study how to pass to the limit in  (\ref{eq:weak_form_epsilon}) when $\E \to 0$. We note that  
the only avaible estimates which do not depend on $\E$ are those given by the energy balance (\ref{eq:energy_balance}), which shows that the sequence 
$(\uv_\E)_{\E>0}$ is bounded in $L^\infty(\R_+, L^2(\R^3)^3)$ as well as in $L^2(\R_+, H^1(\R^3)^3$. A little bit  more can be said:
\begin{lemma}\label{lem:w(t)} 
\begin{enumerate}[i)] 
\item  The function $t \to W_\E(t)$ is uniformly bounded, namely $\forall \, \E >0$,  $\forall \, t \ge 0$, 
\BEQ \label{eq:inquwepsilon} W_\E(t) \le W_\E (0) \le W(0), \EEQ 
\item For any $\E >0$, $t \to W_\E(t)$ is a non increasing function of $t$.
\end{enumerate}
\end{lemma}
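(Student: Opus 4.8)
The plan is to read both statements directly off the energy balance (\ref{eq:energy_balance}), which by Theorem \ref{thm:regularity_result} is a genuine \emph{equality} holding for the regular solution $\uv_\E$ at every time $t \ge 0$; throughout I write $W_\E(t) = || \uv_\E(t, \cdot) ||_{0,2}^2$ and let $K_{A,\E}$ denote the quantity (\ref{eq:KAEpsilon}) formed from $\uv_\E$. The only structural fact needed is that the two dissipation terms in the balance are non-negative: $\nu J^2 \ge 0$ since $\nu > 0$, and $K_{A,\E}^2 = \inte A\, | \g \overline{\uv_\E} |^2 \, d\x \ge 0$ precisely because of the standing assumption $A \ge 0$. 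No fresh estimate is required; both claims are consequences of the equality together with these signs.

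For item i), I would start from (\ref{eq:energy_balance}) written for $\uv_\E$, that is ${1 \over 2} W_\E(t) + \nu \int_0^t J^2(t') dt' + \int_0^t K_{A,\E}^2(t') dt' = {1 \over 2} W_\E(0)$, and simply discard the two non-negative time integrals to obtain $W_\E(t) \le W_\E(0)$ for all $t \ge 0$. The residual inequality $W_\E(0) \le W(0)$ is exactly (\ref{eq:w(o)}): it follows from Young's convolution inequality, $|| \overline{\uv_0} ||_{0,2} = || \rho_\E \star \uv_0 ||_{0,2} \le || \rho_\E ||_{0,1}\, || \uv_0 ||_{0,2} = || \uv_0 ||_{0,2}$, since $\rho_\E \ge 0$ and $\inte \rho_\E \, d\x = 1$. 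Chaining the two yields (\ref{eq:inquwepsilon}).

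For item ii), I would invoke (\ref{eq:energy_balance}) at two instants $0 \le t_1 \le t_2$ and subtract the two equalities, which is legitimate precisely because the balance holds with equality at every time. This produces
\[
{1 \over 2} W_\E(t_2) - {1 \over 2} W_\E(t_1) + \nu \int_{t_1}^{t_2} J^2(t') dt' + \int_{t_1}^{t_2} K_{A,\E}^2(t') dt' = 0 ,
\]
and since the two integrals over $[t_1, t_2]$ are again non-negative, we conclude $W_\E(t_2) \le W_\E(t_1)$. Hence $t \to W_\E(t)$ is non-increasing.

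There is essentially no hard step here; the result is a soft corollary of the already-proved energy identity. The only two points deserving a word of care are that the balance must be used in its global-in-time form (guaranteed by Theorem \ref{thm:regularity_result}, not merely on the local intervals $I_{\E,m}$), and that the sign of the eddy-viscosity contribution $K_{A,\E}^2$ rests on the hypothesis $A \ge 0$. Both are already available, so the uniform bound and the monotonicity follow at once.
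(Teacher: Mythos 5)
Your proof is correct and is exactly the argument the paper intends: the lemma is an immediate corollary of the global-in-time energy balance (\ref{eq:energy_balance}) established in Theorem \ref{thm:regularity_result}, together with the signs $\nu J^2 \ge 0$ and $K_{A,\E}^2 \ge 0$ (from $A \ge 0$) and the convolution bound (\ref{eq:w(o)}). The paper simply states the lemma without a separate proof, and your write-up supplies precisely the omitted details, including the subtraction of the balance at two instants to get monotonicity.
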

 It results from Helly's Theorem (see for instance in \cite{PLL84}):
\begin{corollary} \label{cor:wtilde} There exists $(\E_n)_{n \in \N}$ that goes to zero when $n \to \infty$, a non increasing function 
$\widetilde W(t)$ such that for all $t \ge 0$, $W_{\E_n}(t) \to \widetilde W(t)$ as $n \to \infty$. 
\end{corollary}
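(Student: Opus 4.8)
The plan is to apply Helly's selection theorem directly to the family $\{W_\E\}_{\E>0}$, whose two required properties have already been established in Lemma \ref{lem:w(t)}. First I would fix an arbitrary sequence of parameters tending to zero, say $\E_k = 1/k$, and consider the associated functions $t \mapsto W_{\E_k}(t)$ on $\R_+$. By item i) of Lemma \ref{lem:w(t)}, together with the trivial bound $W_\E(t) = ||\uv_\E(t,\cdot)||_{0,2}^2 \ge 0$, each of these functions takes its values in the fixed compact interval $[0, W(0)]$; by item ii), each is non-increasing in $t$. Thus $\{W_{\E_k}\}_{k}$ is a uniformly bounded sequence of monotone functions, which is precisely the hypothesis of Helly's theorem.

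Next I would invoke Helly's selection theorem to extract a subsequence, still denoted $(\E_n)_{n}$ after relabelling, and a limit function $\widetilde W \colon \R_+ \to [0, W(0)]$ such that $W_{\E_n}(t) \to \widetilde W(t)$ for every $t \ge 0$, the limit $\widetilde W$ being again non-increasing since monotonicity passes to the pointwise limit. Because the original sequence satisfied $\E_k \to 0$, the extracted subsequence also satisfies $\E_n \to 0$, which is the assertion of the corollary.

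If one unfolds the cited theorem rather than quoting it, the argument proceeds in two diagonal stages. Since $\mathbb{Q} \cap \R_+$ is countable and each sequence $(W_{\E_k}(q))_k$ lies in $[0, W(0)]$, a first diagonal extraction produces a subsequence along which $g(q) := \lim_n W_{\E_n}(q)$ exists for every rational $q \ge 0$; the function $g$ is non-increasing on the rationals, and one sets $\widetilde W(t) = \inf\{ g(q) : q \in \mathbb{Q},\ q < t \}$, which is non-increasing by construction. At any point $t$ where $\widetilde W$ is continuous, the monotonicity of each $W_{\E_n}$ squeezes $W_{\E_n}(t)$ between its values at rationals $q_1 < t < q_2$; passing to the limit in $n$ and then letting $q_1 \to t^-$, $q_2 \to t^+$ forces $W_{\E_n}(t) \to \widetilde W(t)$.

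The only genuinely delicate point is upgrading this to convergence at \emph{every} $t \ge 0$, as the statement requires, rather than merely off the discontinuity set of $\widetilde W$. Since a monotone function has at most countably many discontinuities, this exceptional set is countable, and a second diagonal extraction over it secures pointwise convergence there as well, at the cost of nothing more than a further relabelling of the subsequence. No estimate specific to the Navier--Stokes structure enters at this stage: the entire content is the uniform bound and the monotonicity already recorded in Lemma \ref{lem:w(t)}.
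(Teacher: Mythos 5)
Your proof is correct and follows the same route as the paper, which simply invokes Helly's selection theorem on the family $\{W_\E\}$ using the uniform bound $0 \le W_\E(t) \le W(0)$ and the monotonicity from Lemma \ref{lem:w(t)}. Your unfolding of Helly's theorem (diagonal extraction over the rationals, then over the countable discontinuity set of the monotone limit) is the standard argument and adds nothing beyond what the paper's citation already covers.
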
 
From now we will consider this sequence $(\E_n)_{n \in \N}$.  
Let us write the identity (\ref{eq:weak_form_epsilon}) under the form 
 \BEQ \label{eq:compactness} \inte \overline{\uv_0}(\x) \cdot \wv (0, \x) d\x  = \inte \uv_{\E_n}  (t, \x) \cdot \wv (t, \x) d\x + I_{1, \E_n}(t, \wv)  + I_{2, \E_n}(t, \wv). \EEQ
We will prove in this section the following results, which will complete the proof of Theorem \ref{thm:turb_exist}.
\BL \label{lem:conv_1} There exists a nondecreasing sequence $(n_j)_{j\in\N}$ such that  for all $t \ge 0$, for all $\wv \in \hbox{E}_\sigma$, the sequences 
$(I_{1, \E_{n_j}}(t, \wv))_{j \in \N}$ and  $(I_{2, \E_{n_j}}(t, \wv))_{j \in \N}$ are convergent sequences. \EL
\BL  \label{lem:conv_2} For  each fixed time $t$, there exists $\uv(t, \cdot) \in L^2(\R^3)^3$
such that $(\uv_{\E_{n_j}}(t, \cdot))_{n \in \N}$ weakly converges to $\uv$ in $L^2(\R^3)^3$. \EL
\BL  \label{lem:conv_3} There exists a set $A \subset \R_+$ the complementary of which is a zero measure set and such that for all $t \in A$, the sequence $(\uv_{\E_{n_j}}(t, \cdot))_{n \in \N}$ strongly converges to $\uv(t, \cdot)$ in $L^2(\R^3)^3$. 
\EL
\BL \label{lem:conv_4} The field $\uv = \uv(t, \x)$ is a turbulent solution to the NSE (\ref{eq:NSE}). 
\EL
This program is divided into two subsections. The first is devoted to prove Lemma  \ref{lem:conv_1}, divided in turn into two sub-subsections, 
one considering $(I_{1, \E_{n_j}}(t, \wv))_{j \in \N}$, the other  $(I_{2, \E_{n_j}}(t, \wv))_{j \in \N}$. In the second subsection we prove Lemma 
\ref{lem:conv_2}, \ref{lem:conv_3} and \ref{lem:conv_4} one after another.

 \subsection{Weak convergence and measures}\label{sec:weak_conv} 
  In all what follows, $\wv$ is any given field of $ \hbox{E}_\sigma$, $t \in \R_+$.\subsubsection{Convergence of  $I_{1, \E_n}(t, \wv)$ } \label{sec:intro_u} 
 As $(\uv_{\E_n})_{n \in \N}$ is bounded in 
 $L^\infty(\R_+, L^2(\R^3)^3) = (L^1(\R_+, L^2(\R^3)^3))'$,  we can extract from the sequence $(\uv_{\E_n})_{n \in \N}$ a subsequence which converges 
 to a field $\widetilde \uv = \widetilde  \uv (t, \x) \in  L^\infty(\R_+, L^2(\R^3)^3)$ for the weak $\star$ topology of $L^\infty(\R_+, L^2(\R^3)^3)$. We still denote this subsequence 
 $(\uv_{\E_n})_{n \in \N}$ for the simplicity. 
 We show in what follows: 
 \BEQ \label{eq:limite_I_1}  \begin{array}{l}\zoom \lim_{n \to \infty}  I_{1, \E_n} (t, \wv) = \\ \zoom 
\hskip 2cm \int_0^t \inte \widetilde  \uv (t', \x) \cdot \left [ \nu \Delta \wv (t', \x) +  \div ( A  \g  \wv) (t', \x) +  {\p \wv  \over \p t' } (t', \x) \right ]  d\x dt'. \end{array} \EEQ 
 From the definition of $ \hbox{E}_\sigma$ (see  (\ref{eq:definition_E_sigma})), $\Delta \wv$, $\zoom {\p \wv \over \p t}$ $\in L^1(\R_+, L^2(\R^3)^3)$, thereby
 \BEQ \label{eq:limite_1} \begin{array}{l} \zoom \lim_{n \to \infty} \int_0^t \inte \uv_{\E_n} (t', \x) \cdot \left [ \nu \Delta \wv (t', \x) +  {\p \wv  \over \p t' } (t', \x) \right ]  d\x dt' 
 = \\~\\
\hskip 4cm  \zoom \int_0^t \inte \widetilde \uv (t', \x) \cdot \left [ \nu \Delta \wv (t', \x) +  {\p \wv  \over \p t' } (t', \x) \right ]  d\x dt'.  \end{array} \EEQ
It remains to   pass to the limit in the term 
$$ \int_0^t \inte \uv_{\E_n} (t', \x)  \,  \div (  \overline{A  \g \overline \wv}) (t', \x)  d \x dt' . $$
Let  $\Delta_{\E_n}$ denotes the difference
$$\Delta_{\E_n}  = \zoom \int_0^t \inte \uv_{\E_n} (t', \x)  \,  \div (  \overline{A  \g \overline \wv}) (t', \x)  d \x dt' -  \int_0^t \inte \widetilde  \uv(t', \x)  \,  \div ( A  \g  \wv) (t', \x)  d \x dt', $$
that we split as
$$\begin{array}{lll}  \zoom   
 \Delta_{\E_n}  & = &  \zoom \int_0^t \inte (\uv_{\E_n} (t', \x) - \widetilde  \uv(t', \x) )  \,  \div ( A  \g  \wv) (t', \x) d \x dt'  + \\~ \\
& & + \zoom \int_0^t \inte (\uv_{\E_n} (t', \x) (  \div (  \overline{A  \g \overline \wv}) (t', \x) -  \div ( A  \g  \wv) (t', \x) )  d \x dt' \\~ \\
& = &  \Delta_{\E_n, 1} +  \Delta_{\E_n, 2}. 
\end{array} $$
As  $\wv \in \hbox{E}_\sigma$ and $A  \in C_b(\R_+, W^{1, \infty}( \R^3))$, then  $\div ( A  \g  \wv) \in L^1(\R_+, L^2(\R^3)^3)$, leading to
$ \Delta_{\E_n, 1}  \to 0$ as $n \to \infty$. Moreover,  the Cauchy-Schwarz inequality and the energy balance yield
$$ | \Delta_{\E_n, 2}  | \le \sqrt {W(0)} \int_0^t  || \div (  \overline{A  \g \overline \wv}) (t', \cdot) -  \div ( A  \g  \wv) (t', \cdot) ) ||_{0,2} dt'. $$
By (\ref{eq:conv_ineq_2}) and algebraic calculations, we get
$$ || \div (  \overline{A  \g \overline \wv}) (t', \cdot) -  \div ( A  \g  \wv) (t', \cdot) ) ||_{0,2} \le  C\E_n || A ||_{1, \infty} || \wv ||_{3, 2}, $$
 hence\footnote{This is where we need $\g A \in L^\infty(\R_+ \times \R^3)$. It is likely that there is a way to do without this hypothesis.}  $ \Delta_{\E_n, 2}  \to 0$ as $n \to \infty$, again because $A \in C_b(\R_+, W^{1, \infty}(\R^3))$ and $\wv \in L^1_{loc}(\R_+, H^3(\R^3)^3)$. In conclusion, we obtain
\BEQ \begin{array}{l} \zoom \lim_{n \to \infty}  \int_0^t \inte \uv_{\E_n} (t', \x)  \,  \div (  \overline{A  \g \overline \wv}) (t', \x)  d \x dt' = \\
\hskip 5cm  \zoom  \int_0^t \inte \widetilde  \uv(t', \x)  \,  \div ( A  \g  \wv) (t', \x)  d \x dt' ,\end{array} \EEQ
hence (\ref{eq:limite_I_1}).  In the following, we shall denote by $I_1(t,\wv)$ the limit of $(I_{1, \E_n}(t, \wv))_{n \in \N}$.  

\subsubsection{Convergence of $I_{2, \E_n}(t, \wv)$}
We  show below  step by step that $I_{2, \E_n}(t, \wv)$ converges to a limit denoted by $I_2(t, \wv)$, which is not well  identified  at this stage. 

{\sl a) The Convective measures } are defined by 
\BEQ \mu_n= \overline{\uv_{\E_n}} \otimes \uv_{\E_n}. \EEQ
Let us fix a given time $T>0$. We will study the sequence $( \mu_n)_{n \in \N}$ on $[0,T]$ for technical conveniences, which is not restrictive since $T$ may be chosen as large as we want. 
The energy balance yields 
$$ \forall \, t' \in [0, T], \quad \inte  \mu_n(t', \x) d \x dt' \le  W(0), $$
 hence $( \mu_n)_{n \in \N}$ is bounded in $L^\infty([0,T], L^1(\R^3)^9)$, which suggests to treat each component of $\mu_n$ as measures. 
 \medskip 
 
{\sl b) Compactness.}  Let $k \in \N$, $B_k = B({\rm O},k) \subset \R^3$ be the ball centered at the origin ${\rm O}$ with radius $k$.  We denote by $M(B_k)$ the set of  radon measures  over $B_k$. Therefore, the considerations above show that  any $k$ being fixed, 
 $$ \hbox{the sequence} \quad ( \mu_n)_{n \in \N} \quad \hbox{is bounded in} \quad  
L^\infty([0,T], M(B_k )^9) = (L^1 ( [0,T], C(B_k)^9))'. $$  
 Let $\mu_n^{k}$ denotes the restriction of $\mu_n$ to the ball $B_k$, 
\BEQ \label{eq:restriction}  \mu_n^{k} = \mu_n \vert_{B_k}. \EEQ 
We deduce from the Banach-Aloaglu theorem combined with the Cantor diagonal argument that there exists a sequence $(n_j)_{j \in \N}$ such that 
each sequence of measures $(\mu_{n_j}^{k})_{j \in \N}$ converges to a measure $\mu^k$ in $L^\infty([0,T], M(B_k )^9)$ weak $\star$. Moreover,  
$$ \forall \, k < k', \quad \mu^{k'} \vert_{B_k} = \mu^k. $$ 
{\sl c) Passing to the limit.}
We show the convergence of  the sequence $(I_{2, \E_{n_j}}(t, \wv))_{j \in \N}$ by the Cauchy's criterion in using the estimate (\ref{eq:infinity}) of Lemma \ref{lem:alinfini}, which guaranties that the $\mu_n$'s have low mass distributions  at infinity. 
Let ${\bf a} = {\bf a}(t, \x) \in L^1 ( [0,T], C_b(\R^3)^9)$, where $C_b(\R^3)$ denotes the space of bounded continuous functions on $\R^3$, $t \in [0, T]$. 
Let 
$$ \lambda_n (t,  {\bf a}  ) = \int_0^t \inte \mu_n (t' \x) : {\bf a}(t', \x)\, d \x dt', $$
$\eta >0$, and $k \in \N$, the choice of which will be decided later.  We have 
\BEQ \label{eq:autre_ineg} \begin{array}{l} \zoom  | \lambda_{n_p} ( t, {\bf a}  ) - \lambda_{n_q} (t,  {\bf a}  ) | \le    \int_0^t || {\bf a}(t', \cdot) ||_{0, \infty} \left ( \int_{ | \x | \ge k} | \mu_{n_p } (t', \x) - \mu_{n_q } (t', \x) | d\x \right ) dt'  \\~\\
\hskip 4cm \zoom + \left   \vert \int_0^t \int_{B_k}  (\mu_{n_p } (t', \x) - \mu_{n_q } (t', \x)) : {\bf a}(t', \x)\, d \x dt'  \right \vert.
\end{array} 
\EEQ
The function $\varphi$ involved in (\ref{eq:infinity}) being non increasing, we get by (\ref{eq:infinity}) for $k \ge 2 R_0$, 
\BEQ \label{eq:inegalite_passage} \begin{array}{l} \zoom \int_0^t || {\bf a}(t', \cdot) ||_{0, \infty} \left ( \int_{ | \x | \ge k} | \mu_{n_p } (t', \x) - \mu_{n_q } (t', \x) | d\x \right ) dt' \le \\ ~ \\ 
\zoom \hskip 3cm \left ( \int_{ | \x | \ge k/2} | \uv_0 (\x) |^2 d\x +{4 \varphi(T) \over k } \right )  \int_0^T || {\bf a}(t', \cdot) ||_{0, \infty} dt'. 
\end{array} 
\EEQ 
As $\uv_0 \in L^2(\R^3)$, we can fix $k \ge R_0$ such that the r.h.s of (\ref{eq:inegalite_passage}) is less than $\eta/2$. Furthermore, for this $k$ and by the definition 
(\ref{eq:restriction}) of the  $\mu_n^k$'s:
$$  \begin{array}{l} \zoom  \left \vert \int_0^t \int_{B_k}  (\mu_{n_p } (t', \x) - \mu_{n_q } (t, \x)) : {\bf a}(t', \x)\, d \x dt'   \right \vert  = \\ ~ \\
\hskip 3cm \zoom \left \vert  \int_0^t \int_{B_k}  (\mu_{n_p }^k (t', \x) - \mu_{n_q }^k (t', \x)) : {\bf a}(t', \x)\, d \x dt' \right \vert = J_{p,q}^k .  \end{array} $$
From the weak convergence of the sequence $(\mu_{n_j}^{k})_{j \in \N}$, we deduce that there exists $j_0$ (depending on ${\bf a}$) such that for any 
$p, q \ge j_0$, we have $J_{p,q}^k \le \eta/2$. Therefore, (\ref{eq:autre_ineg}) gives for such $p,q$, 
$$ | \lambda_{n_p} (t, {\bf a}  ) - \lambda_{n_q} (t, {\bf a}  ) | \le \eta.$$
Then the sequence $( \lambda_{n_j} (t, {\bf a}))_{j \in \N}$  is a Cauchy sequence in $\R$, thus convergent. Let 
$\lambda ( t, {\bf a})$ denotes its limit. In view of the choice of the space $\hbox{E}_\sigma$, for any $t >0$ and any $\wv \in \hbox{E}_\sigma$, $\g \wv \in L^1 ( [0,t], C_b(\R^3)^9)$. Therefore 
\BEQ \label{eq:limit_convective_term} \lim_{j \to \infty} I_{2, \E_{n_j}}(t, \wv)  = \lambda(t, \g \wv).  \EEQ
This concludes the proof of Lemma \ref{lem:conv_1}. In the following, we set $I_2(t, \wv) = \lambda(t, \g \wv)$. \hfill $\square$ 
\smallskip

According to  the equality (\ref{eq:compactness}), the lemma \ref{lem:conv_1} admits the following corollary.
\begin{corollary} \label{cor:convergence}  For all 
$\wv \in \hbox{E}_\sigma$, all $t \in \R_+$, the sequence
$$\left (  \inte \uv_{\E_{n_j} }  (t, \x) \cdot \wv (t, \x) d\x \right )_{j \in \N}$$
has a limit uniquelly determined, namely
\BEQ \label{eq:limite_uniquelly}  \lim_{j \to \infty} \inte \uv_{\E_{n_j} }  (t, \x) \cdot \wv (t, \x) d\x =  \inte \uv_0(\x) \cdot \wv (0, \x) d\x - I_1(t,\wv) -  I_2( t, \wv). \EEQ
\end{corollary}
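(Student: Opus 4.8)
The plan is to obtain the statement directly from the exact identity (\ref{eq:compactness}), which holds for every index $n$, by isolating the scalar product of interest and then passing to the limit term by term along the subsequence $(n_j)_{j \in \N}$ already extracted in Lemma \ref{lem:conv_1}. All the genuine analytic work has in fact been carried out in that lemma; what remains is a rearrangement together with one elementary convergence for the initial datum.

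First I would rewrite (\ref{eq:compactness}) in the form
\[
\inte \uv_{\E_n}(t, \x) \cdot \wv (t, \x) \, d\x = \inte \overline{\uv_0}(\x) \cdot \wv (0, \x)\, d\x - I_{1, \E_n}(t, \wv) - I_{2, \E_n}(t, \wv),
\]
so that the quantity whose limit we seek is expressed through the three terms on the right. By Lemma \ref{lem:conv_1}, for the fixed $t \ge 0$ and the fixed $\wv \in \hbox{E}_\sigma$ under consideration, the sequences $(I_{1, \E_{n_j}}(t, \wv))_{j \in \N}$ and $(I_{2, \E_{n_j}}(t, \wv))_{j \in \N}$ converge, respectively to $I_1(t, \wv)$ and $I_2(t, \wv)$.

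It then remains to treat the term carrying the initial datum, which I would handle by the standard mollifier property recalled in the subsection on the mollifier: since $\uv_0 \in L^2(\R^3)^3$, we have $\overline{\uv_0} = \rho_{\E_n} \star \uv_0 \to \uv_0$ in $L^2(\R^3)^3$ as $n \to \infty$. Because $\wv \in C(\R^+, L^2(\R^3)^3)$, the trace $\wv(0, \cdot)$ belongs to $L^2(\R^3)^3$, whence by the Cauchy--Schwarz inequality
\[
\left| \inte \bigl( \overline{\uv_0} - \uv_0 \bigr)(\x) \cdot \wv (0, \x)\, d\x \right| \le || \overline{\uv_0} - \uv_0 ||_{0,2}\; || \wv(0, \cdot) ||_{0,2},
\]
which tends to $0$ as $n \to \infty$. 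Hence $\inte \overline{\uv_0}(\x) \cdot \wv (0, \x)\, d\x \to \inte \uv_0(\x) \cdot \wv (0, \x)\, d\x$.

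Combining the three limits, the right-hand side of the rearranged identity converges along $(n_j)$, and therefore so does the left-hand side, towards
\[
\inte \uv_0(\x) \cdot \wv (0, \x)\, d\x - I_1(t, \wv) - I_2(t, \wv),
\]
which is exactly (\ref{eq:limite_uniquelly}); the limit is uniquely determined, being a limit of real numbers. As anticipated, there is no real obstacle at this stage: the only point deserving a word is the initial-datum term, controlled above by the $L^2$ convergence of the mollification and Cauchy--Schwarz, while everything else is inherited from Lemma \ref{lem:conv_1}.
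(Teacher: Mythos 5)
Your proof is correct and follows exactly the route the paper intends: the corollary is stated there as an immediate consequence of rearranging the identity (\ref{eq:compactness}) and invoking Lemma \ref{lem:conv_1}. Your explicit treatment of the initial-datum term, via $\overline{\uv_0} \to \uv_0$ in $L^2(\R^3)^3$ and Cauchy--Schwarz, is a detail the paper leaves implicit but is exactly what is needed to pass from $\inte \overline{\uv_0} \cdot \wv(0,\cdot)$ to $\inte \uv_0 \cdot \wv(0,\cdot)$.
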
  
\begin{Remark} It would not be surprising that what is done above has something to do with the Young measures (see in \cite{JMB89, LT79, LCY69}). 
There also could be connections with the $H$-measures, initially introduced by L. Tartar (see \cite{LT85, LT90}) as well as with the work by 
A. Majda and R. DiPerna \cite{DM87}. All of this remains to be clarified. 
\end{Remark} 

\subsection{Transition from weak to strong convergence: conclusion } \label{sec:transition_weak_strong} 

{\sl  Proof of Lemma \ref{lem:conv_2}.} Consider 
$${\bf a} \in \bigcap_{m \ge 0} H^m(\R^3)^3, $$
such that $\div \, {\bf a} = 0$. 
Let $t >0$, $\varphi = \varphi(t')$ be a non negative function of class $C^\infty$ less than 1 such that $\varphi(t') = 1$ when $t' \in [0, t+1]$, $\varphi(t') = 0$ 
when $t' \in [t+2, \infty[$. It is easely checked that 
$$ {\bf w} (t', \x) = \varphi(t' ) {\bf a}(\x) \in  \hbox{E}_\sigma. $$
As $ {\bf w} (t, \x) = {\bf a} (\x)$, Corollary \ref{cor:convergence} implies that the sequence
$$\left ( \inte \uv_{\E_{n_j}}  (t, \x) \cdot {\bf a}( \x) d\x \right )_{j \in \N} $$
has a limit uniquely determined. We recall that  $|| \uv_{\E_{n_j}} (t, \cdot) ||_{0, 2} \le \sqrt{W(0)}$.  By Lemma 7 page 209 in \cite{Ler1934}, we can conclude that the sequence  $(\uv_{\E_{n_j}}  (t, \cdot))_{j \in \N}$ has a weak limit in $L^2(\Om)^2$, denoted by $\uv(t, \cdot)$. 
This weak convergence also  leads to: 
\BEQ \label{eq:wtildemaj}  \inte | \uv(t, \x) |^2 d \x \le \widetilde W(t),\EEQ
where $\widetilde W$ is introduced in Corollary \ref{cor:wtilde}.  \hfill $\square$
\smallskip

For the simplicity, we write from now $\E$ instead of 
 $\E_{n_j}$, $\E \to 0$ instead of $j \to \infty$. 
 \medskip

{\sl  Proof of Lemma \ref{lem:conv_3}.} We recall the energy balance satisfied by $\uv_\E$
\BEQ \label{eq:energy_balance_epsilon}  {1 \over 2} W_\E(t) + \nu \int_0^t J_\E^2(t') dt' + \int_0^t  K_{A,\E}^2(t') dt' = {1 \over 2} W_\E(0) \le {1 \over 2} W(0)  , \EEQ 
where 
\BEQ \left \{  \begin{array}{l}  \zoom W_\E(t) = \inte | \uv_\E (t, \x)|^2 d\x, \\
\zoom J_\E^2 (t) = \inte | \g \uv_\E(t, \x)  |^2 d\x, \\
\zoom K_{A,\E}^2(t) = \inte A(t, \x)  | \g \uv_\E(t, \x)  |^2 d\x. \end{array} \right. 
\EEQ

We next deduce from Fatou's Lemma and (\ref{eq:energy_balance_epsilon}),   
\BEQ \label{eq:energy_balance_fin}  \nu \int_0^\infty (\liminf_{\E \to 0} J_\E (t') )^2 dt' \le  {1 \over 2} W(0).  \EEQ
Therefore, $\zoom  t \to \liminf_{\E \to 0} J_\E (t) \in L^1 ([0, \infty[)$. Then  there exists a set $A \subset [0, \infty[$, such that $meas(A^c) = 0$ and 
\BEQ \forall \, t \in A, \quad  
\zoom \liminf_{\E \to 0} J_\E (t) < \infty. \EEQ
Let $ t \in A \cap [0,T]$ be fixed. There is a sequence $(\E_n)_{n\in\N}$ going to zero when $n \to \infty$ (which could depend on $t$) and such that 
 the sequence $(J_{\E_n} (t))_{n \in \N}$ is bounded.  In particular, $(\uv_{\E_n})_{n \in \N}$ is bounded in $H^1(\R^3)$.

Let $\eta >0$ be given. We know from Lemma \ref{lem:alinfini} that there exists $R >0$ (which depends on $T$, $\uv_0 $ and $\eta$) and such that 
\BEQ \label{eq:petituvinfty}  \int_{ | \x | \ge R} | \uv_{\E_n} (t, \x) |^2d \x \le \eta. \EEQ
Furthermore, as the sequence $(\uv_{\E_n} )_{n \in \N}$ is bounded in $H^1(\R^3)$ and has a unique adherence value  in $L^2(B_R)^3$ for the weak topology,   the Rellich-Kondrachov theorem applies: the sequence $(\uv_{\E_n} )_{n \in \N}$ converges to $\uv$ strongly in $L^2(B_R)$. Then (\ref{eq:petituvinfty}) gives
\BEQ \label{eq:limsupinf}  \limsup_{n \to \infty} W_{\E_n} (t) \le \int_{B_R} | \uv (t, \x) |^2 d\x+ \eta \le \inte | \uv (t, \x) |^2 d\x + \eta, \EEQ
which holds for any $\eta>0$. Therefore, combining (\ref{eq:limsupinf}) with the definition of the function 
$\widetilde W$ and the inequality (\ref{eq:wtildemaj}), we get:
\BEQ  \inte | \uv (t, \x) |^2 d\x \le \widetilde W(t) =  \limsup_{n \to \infty} W_{\E_n} (t) \le \inte | \uv (t, \x) |^2 d\x, \EEQ
hence the convergence of $(|| \uv_\E(t, \cdot) ||_{0, 2})_{\E >0}$ to $|| \uv (t, \cdot) ||_{0, 2}$ and the conlusion of the proof, because 
we already know that $(\uv_\E(t, \cdot))_{\E >0}$ weakly converges to $\uv(t, \cdot)$ in $L^2(\R^3)^3$. 
\hfill $\square$

 \medskip
 {\sl Proof of Lemma \ref{lem:conv_4}}.  In order to prove that $\uv$ is a turbulent solution to the NSE, it remains to:
 \begin{enumerate}[a)] 
 \item \label{it:afin} Check that $\widetilde \uv = \uv$, where  $\widetilde \uv$ was introduced in subsection \ref{sec:weak_conv} as the weak star limit 
 of $(\uv_\E)_{\E >0}$ in $L^\infty(\R_+, L^2(\R^3)^3)$, 
 \item \label{it:bfin} Show that 
\BEQ   \label{eq:lim_conv_term}  \forall \, t \in \R_+, \, \forall \, \wv \in \hbox{E}_\sigma, \quad  I_2(t, \wv) = \int_0^t \inte {\uv} (t', \x) \otimes \uv(t', \x) : \g \wv (t, \x) d\x dt',  \EEQ
where $I_2 (t, \wv)$ was defined by
 (\ref{eq:limit_convective_term}),
 \item \label{it:cfin}  Check that $\uv$ satifies the energy inequality, \end{enumerate}
 which is done in the following each item after another.  
 \smallskip
 
 \ref{it:afin}) {\sl Weak star limit identification}. Let ${\bf a} \in  L^1(\R_+, L^2(\R^3)^3)$, and consider 
 $$ \varphi_\E(t) = \inte \uv_\E (t, \x) \cdot {\bf a} (t, \x) d \x. $$
 According to Lemma \ref{lem:conv_3}, 
 $$ \forall \, t \in A, \quad \lim_{\E \to 0} \varphi_\E(t) = \varphi(t) = \inte \uv (t, \x) \cdot {\bf a} (t, \x) d \x.$$
 Moreover, the Cauchy-Schwarz inequality combined with the inequality (\ref{eq:inquwepsilon}) gives 
 $$ |  \varphi_\E(t)  | \le \sqrt{W_\E(t)} || {\bf a} (t, \cdot) ||_{0, 2} \le \en  || {\bf a} (t, \cdot) ||_{0, 2} \in L^1(\R_+).$$
Because $meas(A^C) = 0$, we deduce from Lebesgue Theorem that 
 $$\lim_{\E \to 0}  \int_0^\infty \varphi_\E(t) dt =  \int_0^\infty \varphi(t) dt, $$
 hence $\widetilde \uv = \uv$. 
 
 \smallskip
  \ref{it:bfin}) {\sl Limit in the convective term.} Let $\wv \in \hbox{E}_\sigma$, and 
 $$ \psi_\E (t) = \inte \overline{\uv_\E} (t, \x) \otimes \uv_\E(t, \x) : \g \wv (t, \x) d\x. $$
Let $t \in A$. Obviously $ \overline{\uv_\E} (t, \cdot)  \to \uv (t, \cdot)$ strongly in $L^2(\R^3)^3$. As $\g \wv$ is bounded in space and time, we deduce that for all $t \in A$, 
$$ \psi_\E (t) \to \psi (t) = \inte \uv (t, \x) \otimes \uv(t, \x) : \g \wv (t, \x) d\x \quad \hbox{as} \quad \E \to 0,$$
and by the energy balance
$$ | \psi_\E (t) | \le C W(0) || \g \wv (t, \cdot) ||_\infty \in L^1([0, \infty[). $$
Then, as $meas(A^c) = 0$, we have by Lebesgue's Theorem,
$$ \forall \, t \in \R_+, \quad  \lim_{\E \to 0} \int_0^t  \psi_\E (t')dt' =  \int_0^t  \psi(t')dt',$$
hence (\ref{eq:lim_conv_term}) 

\smallskip

 \ref{it:cfin}) {\sl Energy inequality.} It is easily checked that $\forall \, t \in A$, 
 $\uv_\E(t, \cdot) \to \uv(t, \cdot)$ weakly in $H^1(\R^3)^3$. Therefore, 
 $$ \left ( \inte | \g \uv (t, \x) |^2 d \x \right )^{1/2} =  J(t) \le \liminf_{\E \to 0} J_\E (t),$$
 and as $A \in L^\infty (\R_+ \times \R^3)$ and is non negative, a convexity argument yields
 $$ K_A(t) \le \liminf_{\E \to 0} K_{A, \E}(t). $$
 All these inequalities hold for almost all $t \in A$. Then, by taking the limit in the energy balance (\ref{eq:energy_balance_epsilon}) we get by (\ref{eq:wtildemaj}) and Fatou's Lemma,
\BEQ \label{eq:energy_balance_2} {1 \over 2} W(t) + \nu \int_0^t J^2(t') dt' + \int_0^t K_A(t') dt'  \le {1 \over 2} W(0),\EEQ 
as expected, which finishes the proof. \hfill $\square$

\section {Additional remarks and open questions} 

\subsection{Case $A=0$ and obstruction to generalizations}\label{sec:additional_comments}  

Let us recall one  main Leray's argument when $A=0$, written in our framework. In this case,  any regular solution to the NSE (\ref{eq:NSE}) over the time interval $[0,T[$ satisfies, according 
to (\ref{eq:estimate_oseen}), 
\BEQ \label{eq:v(t)leray} V(t) \le V(0) + C \int_0^t { V^2(t') \over \sqrt{\nu (t-t')} } dt'. \EEQ
where $V(t)$ is defined by (\ref{eq:v_m}). The function  $g(t) = 2 V(0)$ 
 is a supersolution to the non linear Volterra equation, 
$$ f(t) \le V(0) + C \int_0^t { f^2(t') \over \sqrt{\nu (t-t')} } dt', $$
over the time interval 
$I = [0,  4 \nu V^{-2} (0) C^{-1}]$. Therefore, by the V-maximum principle\footnote{To be stringent, we should discuss as in Lemma \ref{lem:local_energy}'s proof to  be in the framework for the application of  the V-maximum principle. At this stage this is not essential and we skip here the details.} 
$$\forall \, t \in I, \quad  V(t) \le 2 V(0). $$
From this, it  is possible to control all the norms of any regular solutions. 
This is why Leray was able to construct a regular solution to the NSE (\ref{eq:NSE}) over $[0, T]$ where $T = O( \nu V^{-2} (0))$, by a fixed point process in a space equipped with the uniform norm in space. He also reported that when 
 a singularty occurs at time $T$, then when $t \to T$ (see section 19 in \cite{Ler1934}), 
$$ V(t) \ge C \sqrt { \nu \over T-t}.$$
The fact that the regular solution can be extended to $[0, \infty[$ when $\nu^{-3}  W(0) V(0)$ is small enough (see Theorem \ref{thm:regular_A=0}) is a tricky 
combination of such arguments, generalized as most as possible, as well as the fact that the turbulent solution becomes regular up to a time 
$O(W(0)^2 / \nu^5)$. 
\smallskip

When $A \not=0$, this does not work anymore, since 
we get, because of the eddy viscosity term, 
\BEQ V(t) \le V(0) + C \int_0^t { V^2(t')+ || N_A ||_\infty V_1 (t' )  \over \sqrt{\nu (t-t')} } dt'. \EEQ 
where $V_1(t') = || \g \uv (t', \cdot) ||_{0, \infty}$. Thereby, to control $V(t)$, we must control $V_1(t)$, which involves $V_2(t)$ and so on, and we do not know how to close 
this sequence of inequalities. This is why Leray's program cannot be recycled turnkey. Ideally, Oseen's work  should be rewritten fort the generalized Stokes problem: 
\BEQ \label{eq:Stokes_A} \left \{ \begin{array}{l} 
\p_t \uv - \nu \Delta \uv - \div( A \g \uv) + \g p = f,  \\ 
\div \, \uv = 0,  
\end{array} \right. 
\EEQ
which is not done already so far we know. 

\medskip
In the case of the approximated system (\ref{eq:NSE_eps}) when $A=0$, inequality (\ref{eq:v(t)leray}) becomes 
\BEQ \label{eq:v(t)leray_epsilon} V(t) \le V(0) + C \E^{-3/2} \en \int_0^t { V(t') \over \sqrt{\nu (t-t')} } dt'. \EEQ
Hence, by the V-maximum principle, 
$$ V(t) \le f(t), $$
where $f(t)$ is the unique continuous solution to the linear Volterra equation defined over $[0, \infty[$, 
$$ f(t) = V(0) + C \E^{-3/2} \en \int_0^t { f(t') \over \sqrt{\nu (t-t')} } dt'.$$
By the theory already developed for the NSE (\ref{eq:NSE}) by J. Leray, no additionnal efforts have to be done to establish the existence of a unique solution 
to the approximated system  (\ref{eq:NSE_eps}), which does not work when $A \not=0$. This is why, everything was to be reconsidered. 
\subsection{Leray-$\alpha$, Bardina and others}

The idea of regularizing the convection term by $\um \cdot \g \uv$ led to the actual concept of Leray-$\alpha$ model, and many other close models (NS-$\alpha$, LANS-$\alpha$, Clark-$\alpha$, NS-Voigt, Bardina...), considered as Large Eddy Simulation models (LES) for simulating turbulent flows, although LES emerged in the 60's with the Smagorinsky's work \cite{JS63}.  

Surprisingly, Leray has planted a seed  that germinates these last two decades in the field of modern LES. See for instance in
Ali \cite{HA12, AH13},  Berselli-Iliescu-Layton \cite{BIL06}, Foias-Holm-Titi \cite{FHT01}, Gibbon-Holm \cite{GHJD06, GH08}, Geurt-Holm \cite{GH06},   llyin-Lunasin-Titi \cite{ILT06}, Layton-Rebholz \cite{LR12}, Rebholz \cite{LR08}, this list being non exhaustive.   

These models are based on a regularization calculated by the Helmlhoz filter determined by:
\BEQ \label{eq:helmholz} 
- \alpha^2 \Delta \overline \psi + \overline \psi = \psi \quad  \hbox{in } \R^3, \EEQ
where in this framework the regularizing parameter is named $\alpha$ instead of $\E$. The models are usually considered with periodic boundary conditions, more rarely in a bounded domain with the no-slip condition. The case of an unbounded domain and/or the full space was  not considered before so far we know.

 In Berselli-Lewandowski 
\cite{BL17}, we have investigated the simplified Bardina model in the whole space. Initially introduced by Bardina-Ferziger- Reynolds \cite{BFR80} for weather forecast, this model was 
studied in  \cite{CLT06, LL03, LL06} in the case of periodic boundary conditions. In its simplified version, this model is given by the system 
\BEQ \label{eq:bardina}  \left \{\begin{array}{ll} \zoom 
\p_t \uv+  \g \cdot (\overline{\uv \otimes \uv}) - \nu \Delta \uv + \g p = 0 & \hbox{in } \R^3, \\
\g \cdot \uv=  0 \phantom{\int_0^N}  & \hbox{in } \R^3, \\
\uv_{ t = 0} =\overline{ \uv_0},
\end{array} \right.  
\EEQ 
in which the bar operator is specified by the Helmholz filter (\ref{eq:helmholz}). 
We prove in \cite{BL17} the existence of a unique regular solution to (\ref{eq:bardina}), global in time, that converges to a turbulent solution to the NSE. Attention must be paided with the initial data and the meaning of ``regular solution", since the regularizing effect of the Helmholz filter is lower than that given by a molifier.

What is done in  
 \cite{BL17} is in the same spirit as what is done in the present paper, inspired by Leray'work. It remains to proceed to the same analysis for  the other LES models of this 
 $\alpha$-class mentioned above.
\subsection{Towards the NS-TKE model}The result of Theorem  \ref{thm:turb_exist} still holds when $A \in L^\infty(\R_+, L^\infty( \R^3))$. Indeed, we can approach  $A$ by a sequence $(A_\E)_{\E >0}$, where  
$A_\E \in C_b(\R_+, W^{1, \infty}( \R^3))$, and then pass to the limit in the formulation (\ref{eq:weak_form_epsilon}) when $\E \to 0$. We also can consider the NSE (\ref{eq:NSE}) with a source term 
${\bf f}$ that satisfies a suitable decay condition at infinity. However, we lose the benefit  provided by the monotocity of the function $t \to W(t)$ and we must find out what is the right  function to be considered to replace $t \to \widetilde W(t)$ introduced in Corollary 
\ref{cor:wtilde}. It is not clear that the best choice is $t \to \limsup_{ \E \to 0} W(t)$. This point remains to be discussed, though it is not intractable.  
We have not considered  these issues  to avoid making the text more cumbersome.  
\smallskip

However, this is the right track to tackle the problem of the NS-TKE model in the whole space:
$$ \label{eq:TKE_system_complet_1}  \left \{  \begin{array}{rcll} 
\zoom \p_t \uv  + \uv \cdot \g \uv -\div \left [ (2\nu+ C_u \ell |k|^{1/2}) \, \g  \uv \right ] + \nabla p & = &  0, \\
 \div \, \uv&=& 0,  \\
\zoom \p_t k + \uv \cdot \g k - \div( (\mu + C_k \ell  |k|^{1/2} ) \g k) &= &  \zoom C_u \ell |k|^{1/2} | \g  \uv |^2  -  \ell^{-1} k |k|^{1/2}, \\
 (\uv, k)_{t=0} & = & (\uv_0, k_0), 
\end{array}   \right. $$ 
which is the basic "Reynolds Averaged Navier Stokes`` model of turbulence (see in \cite{CL14}). In this system, $(\uv, p)$ is the mean flow field, and $k$ the turbulent kinetic energy, that measures 
the intensity of the turbulence in a turbulent flow. The function $(t, \x) \to \ell (t, \x)$ is the Prandlt mixing lenght, at this stage a given non negative function, $C_u$ $C_k$ are experimental constants. 
This problem was initially studied in \cite{RL97} in a bounded domain $\Om \subset \R^3$, with homogeneous boundary conditions. The case of $\R^3$ yields a very hard mathematical problem.

\begin{appendices}

\makeatletter
\renewcommand\theequation{\thesection.\arabic{equation}}
\@addtoreset{equation}{section}
\makeatother

\section{Estimates for the Oseen tensor}\label{app:ossen}This appendice has entirely been written by Paul Alphonse and Adrien Laurent.
\begin{theorem}
For $t\in \R^{+*}$ and $\abs{x}>0$, let $$G(t,x)=\frac{1}{\abs{x}}\int_0^{\abs{x}} \frac{e^{-\frac{\rho^2}{4\nu t}}}{\sqrt{t}} \dd \rho.$$
Let 
$$T_{ii}=-\frac{\partial^2 G}{\partial x_j^2}-\frac{\partial^2 G}{\partial x_k^2} \text{ and } T_{ij}=\frac{\partial^2 G}{\partial x_i\partial x_j}$$
be the Oseen tensor.
Then the followig estimates are verified :
$$\abs{T(t,x)}\leq \frac{C}{(\abs{x}^2+\nu t)^\frac{3}{2}},$$
$$\abs{D^m T(t,x)}\leq \frac{C_m}{(\abs{x}^2+\nu t)^\frac{m+3}{2}}.$$
\end{theorem}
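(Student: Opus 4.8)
The plan is to exploit the self-similar structure of $G$ so as to reduce both estimates to size bounds on a single fixed profile. First I would change variables $\rho=\sqrt{\nu t}\,\sigma$ in the defining integral to obtain
\[ G(t,x)=\frac{1}{\sqrt t}\,\Psi\!\left(\frac{x}{\sqrt{\nu t}}\right),\qquad \Psi(\xi)=\frac{1}{|\xi|}\int_0^{|\xi|}e^{-\sigma^2/4}\,d\sigma . \]
Differentiating and applying the chain rule then gives, for every $k\ge0$,
\[ D^kG(t,x)=\nu^{-k/2}\,t^{-(k+1)/2}\,(D^k\Psi)\!\left(\frac{x}{\sqrt{\nu t}}\right). \]
Since each entry $T_{ij}$ is a fixed linear combination of second-order derivatives of $G$, and $D^mT$ is correspondingly a fixed linear combination of derivatives of $G$ of order $m+2$, the two claimed bounds follow once the derivatives of the single function $\Psi$ are controlled. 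Writing $\xi=x/\sqrt{\nu t}$ and using $|x|^2+\nu t=\nu t\,(1+|\xi|^2)$, the tensor estimates are exactly equivalent to the profile bound
\[ \sup_{|\beta|=k}|D^\beta\Psi(\xi)|\le \frac{C_k}{(1+|\xi|^2)^{(k+1)/2}}\qquad(k\ge2), \]
all surviving powers of $\nu$ and $t$ matching automatically and any residual factor of $\nu^{1/2}$ being absorbed into the constant.

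Thus the whole proof reduces to this profile estimate, which I would establish for all $k\ge0$ by splitting into a bounded region and a far region. Near the origin, the expansion $\int_0^re^{-\sigma^2/4}\,d\sigma=\sum_{n\ge0}\frac{(-1/4)^n}{n!(2n+1)}\,r^{2n+1}$ shows that $\Psi(\xi)=\sum_{n\ge0}a_n|\xi|^{2n}$ is a convergent power series in $|\xi|^2$, hence $\Psi\in C^\infty(\R^3)$ (radial and real-analytic, including at $\xi=0$); on the compact set $\{|\xi|\le1\}$ every derivative of $\Psi$ is bounded while the right-hand side is bounded below, so the estimate is immediate there. For $|\xi|\ge1$ I would write
\[ \Psi(\xi)=\frac{\sqrt\pi}{|\xi|}-\frac{R(|\xi|)}{|\xi|},\qquad R(r)=\int_r^\infty e^{-\sigma^2/4}\,d\sigma . \]
The principal term $\sqrt\pi/|\xi|$ is homogeneous of degree $-1$, so its derivatives of order $k$ are homogeneous of degree $-(k+1)$ and bounded by $C_k|\xi|^{-(k+1)}\le C_k(1+|\xi|^2)^{-(k+1)/2}$ on $|\xi|\ge1$; this is exactly where the extra power of decay beyond the naive $k$ originates, and it is consistent with $\Psi(\xi)\sim\sqrt\pi/|\xi|$ at infinity. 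Since $R'(r)=-e^{-r^2/4}$, an elementary induction shows that $R$ and all its derivatives are $O(e^{-r^2/4}\,\mathrm{poly}(r))$, so the remainder $R(|\xi|)/|\xi|$ and all its $\xi$-derivatives decay faster than any power and are negligible against $(1+|\xi|^2)^{-(k+1)/2}$. Combining the two regions yields the profile estimate.

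Finally I would read off the conclusions: with $k=2$ the profile bound gives $|T(t,x)|\le C\,(|x|^2+\nu t)^{-3/2}$, and with $k=m+2$ it gives $|D^mT(t,x)|\le C_m\,(|x|^2+\nu t)^{-(m+3)/2}$, which are the two asserted estimates. The main obstacle is the profile estimate of the previous paragraph, and inside it the bookkeeping of the chain rule (Fa\`a di Bruno formula) when differentiating the radial function $\xi\mapsto\phi(|\xi|)$ in Cartesian coordinates: one must verify that each differentiation of the composite costs precisely one power of $|\xi|$ in the far region. The decomposition into the exactly homogeneous term $\sqrt\pi/|\xi|$ plus an exponentially small remainder is what makes this transparent and sidesteps a direct, messy estimation of the iterated derivatives of $\Psi$; the only routine work left is the induction establishing the Gaussian decay of the derivatives of $R$, which I would not carry out in detail.
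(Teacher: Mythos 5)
Your proof is correct, but it is organized quite differently from the one in the paper. The paper's proof computes $\partial_i G$ and $\partial_i\partial_j G$ explicitly (via an integration by parts on the defining integral), which exhibits each $T_{ij}$ as a Gaussian term plus terms of the form $|x|^{-3}$ and $x_ix_j|x|^{-5}$ times the incomplete moment $\int_0^{y}\rho^4e^{-\rho^2}\,d\rho$ with $y=|x|/(2\sqrt{\nu t})$; it then multiplies by $(|x|^2+\nu t)^{3/2}$, observes that the result is a function of the single variable $y$, and checks boundedness by examining the limits $y\to 0$ and $y\to+\infty$. Higher derivatives are handled by an induction asserting that $D^m T$ keeps the same structural form with a polynomial factor $P_m(y)$ and the moment $\int_0^y\rho^{4+2m}e^{-\rho^2}\,d\rho$. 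Your route instead factors out the self-similarity once and for all, $G(t,x)=t^{-1/2}\Psi(x/\sqrt{\nu t})$, so that all orders of differentiation reduce to a single profile bound $|D^k\Psi(\xi)|\le C_k(1+|\xi|^2)^{-(k+1)/2}$, which you then prove by real-analyticity of $\Psi$ near the origin and by the splitting $\Psi=\sqrt{\pi}|\xi|^{-1}-R(|\xi|)|\xi|^{-1}$ into an exactly homogeneous leading term plus a remainder that is exponentially small together with all its derivatives. The two proofs share the same underlying scaling reduction to one variable, but your decomposition explains cleanly where the decay rate $-(k+1)$ comes from (homogeneity of the leading term) and treats all orders $m$ uniformly, whereas the paper's argument is fully explicit at order two but only sketches the induction for general $m$. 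The price you pay is the Fa\`a di Bruno bookkeeping for derivatives of radial composites, which you correctly identify as routine and which your decomposition indeed renders harmless; the small sign slip on the residual power of $\nu$ (it is $\nu^{1/2}$, not $\nu^{-1/2}$, that appears in the final constant — in any case absorbed, exactly as in the paper's own computation) is immaterial.
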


\begin{proof}
The function $G$ can be extended on $\abs{x}=0$ as a $\CC^\infty$ function.
We have $$\frac{\partial G}{\partial x_i}(t,x)=\frac{x_i}{\abs{x}^2}\left(\frac{e^{-\frac{\abs{x}^2}{4\nu t}}}{\sqrt{t}}-G(t,x)\right).$$
Integrating by part $G$ yields
$$G(t,x)=\frac{e^{-\frac{\abs{x}^2}{4\nu t}}}{\sqrt{t}} +\frac{1}{2\nu t^{\frac{3}{2}}\abs{x}}\int_0^{\abs{x}} \rho^2\frac{e^{-\frac{\rho^2}{4\nu t}}}{\sqrt{t}} \dd \rho.$$
Thus $$\frac{\partial G}{\partial x_i}(t,x)=-\frac{x_i}{2\nu t^{\frac{3}{2}}\abs{x}^3}\int_0^{\abs{x}} \rho^2\frac{e^{-\frac{\rho^2}{4\nu t}}}{\sqrt{t}} \dd \rho.$$
With this same trick, one finds
$$\frac{\partial^2 G}{\partial x_i\partial x_j}(t,x)=-\frac{1}{6\nu t^{\frac{3}{2}}} e^{-\frac{\abs{x}^2}{4\nu t}} \delta_{ij}
+\left(\frac{x_i x_j}{4\nu^2 t^{\frac{5}{2}}\abs{x}^5}-\frac{1}{12\nu^2 t^{\frac{5}{2}}\abs{x}^3}\delta_{ij}\right)\int_0^{\abs{x}} \rho^4 e^{-\frac{\rho^2}{4\nu t}} \dd \rho.$$
And by a change of variables,
$$\frac{\partial^2 G}{\partial x_i\partial x_j}(t,x)=-\frac{1}{6\nu t^{\frac{3}{2}}} e^{-\frac{\abs{x}^2}{4\nu t}} \delta_{ij}
+\left(\frac{8\nu^{\frac{1}{2}} x_i x_j}{\abs{x}^5}-\frac{8\nu^{\frac{1}{2}}}{3\abs{x}^3}\delta_{ij}\right)\int_0^{\frac{\abs{x}}{2\sqrt{\nu t}}} \rho^4 e^{-\rho^2} \dd \rho.$$
We then have
$$\abs{T_{ij}}\lesssim
\frac{1}{t^{\frac{3}{2}}} e^{-\frac{\abs{x}^2}{4\nu t}}
+\frac{1}{\abs{x}^3}\int_0^{\frac{\abs{x}}{2\sqrt{\nu t}}} \rho^4 e^{-\rho^2} \dd \rho.$$
Finally, by denoting $y=\frac{\abs{x}}{2\sqrt{\nu t}}$, we have
$$(\abs{x}^2+\nu t)^\frac{3}{2}\abs{T_{ij}}\lesssim
(1+y^2)^{\frac{3}{2}}e^{-y^2}
+(1+\frac{1}{y^2})^\frac{3}{2}\int_0^{y} \rho^4 e^{-\rho^2} \dd \rho.$$

The first term of the inequality is bounded for all $y\in \R^+$.
We then denote $\varphi(y)$ the function corresponding to the second term of the inequality. The function $\varphi$ is continuous on $\R^{+*}$ and verifies
$$\lim\limits_{y \rightarrow +\infty}\varphi(y) =\int_0^{+\infty} \rho^4 e^{-\rho^2} \dd \rho <+\infty.$$
For the case $y\to 0$, we notice by integrals comparison that
$$\int_0^{y} \rho^4 e^{-\rho^2} \dd \rho \underset{y\to 0}{\sim} \frac{y^5}{5}.$$
Then $$\varphi(y)\underset{y\to 0}{\sim}\frac{y^2}{5},$$
and $\varphi$ is bounded on $\R^+$.
This work gives us that
$$\abs{T(t,x)}\leq \frac{C}{(\abs{x}^2+\nu t)^\frac{3}{2}}.$$

For the estimates on the derivatives, one can show by induction that
$$\abs{\frac{\partial^m}{\partial x_{i_1}...\partial x_{i_m}}\frac{\partial^2 G}{\partial x_i\partial x_j}}(t,x)\lesssim
P_m(y)\frac{1}{t^{\frac{m+3}{2}}}e^{-y^2}
+\frac{1}{\abs{x}^{m+3}}\int_0^{y} \rho^{4+2m} e^{-\rho^2} \dd \rho,$$
where $P_m$ is a polynomial of degree $m$.
Adapting the same method as before yields the estimate on $D^m T$.
\end{proof}

\section{Non linear Volterra equations and V-maximum principle}\label{ap:appen_volt} 
The results of this section about the non linear Volterra equations and the V-maximum principle are due to Luc Tartar. 
\subsection{Framework} 
Let $a \in \R_+$, $T \in \R_+^\star$, $K \in L^1([0,T])$, $k \ge 0$ {\sl a.e.} in $[0,T]$, $P$ a continuous non increasing real valued function.  We consider the following 
functional equation,  
\BEQ \label{eq:equation_volterra}  f(t) =  a + \int_0^t K(t-t') P(f(t')) dt'. \EEQ
When $P(f) = f$, this equation is a Volterra equation. As we have seen in this paper, we have to consider $P$ that are not linear, and this 
is why we call this equation a generalized non linear Volterra equation. 
The aim of this appendix is to prove a maximum principle which states that subsolutions of (\ref{eq:equation_volterra})  are below supersolutions. 

In this section, we define 
the notions of sub and super solutions, and we show how to construct solutions from these sub-super solutions. In the following, we set for any 
$f \in L^\infty([0,T])$, $a \ge 0$, 
\BEQ S[a, f](t) = a + \int_0^t K(t-t') P(f(t')) dt', \quad t \in [0,T].  \EEQ
We first note that as $P$ is non increasing and $K \ge 0$, 
when $f \le g$, then $S[a, f] \le S[a, g]$. Moreover, when 
$f \in L^\infty([0,T]$, then $S[a, f] \in C([0,T])$. 
\begin{definition} We say that $f \in L^\infty([0,T])$ is a subsolution of (\ref{eq:equation_volterra}) if $ f \le S[a, f]$. We say that 
$g \in L^\infty([0,T])$ is  a supersolution of (\ref{eq:equation_volterra}) if $ S[a, g] \le g$. 
\end{definition}
\begin{Remark} We remark that $f=0$ is always a subsolution of (\ref{eq:equation_volterra}). However, it is important to note that the solution of (\ref{eq:equation_volterra}) may be not defined over $[0,T]$. Take for instance $K=1$, $P(z)=z^2$. 
Then (\ref{eq:equation_volterra}) becomes the differential equation $f' = f^2$, $f(0)=0$, whose solution is $f(t) = a (1 - at)^{-1}$, which blows up at a time 
less than $T$ when $a T >1$. In such case, there is no supersolution over $[0,T]$. 
\end{Remark} 
As a consequence of the assumption $K \in L^1([0, T])$,  the following result is straightforward.
\BL \label{lem:supersol} Let $G > a$. Then there exists $\tau \in ]0, T]$ such that $g(t) = G$ is a supersolution of (\ref{eq:equation_volterra}) over $[0, \tau]$. 
\EL
Assume now that there exists a supersolution $g \ge 0$ of (\ref{eq:equation_volterra}) over $[0,T]$, and let $(g_n)_{n \in \N}$ be the sequence defined by 
$$ g_0 = g, \quad g_{n+1} = S[a, g_n]. $$
We obvioulsy have $0 \le g_{n+1} \le g_n$ for all $n$, and
\BL \label{lem:supersol} The sequence $(g_n)_{n \in \N}$ uniformly converges to a solution of (\ref{eq:equation_volterra}). \EL 
\begin{proof} We first observe that $g_n$ is continuous for $n \ge 1$. By monotonicity and since $g_n \ge 0$,  $(g_n)_{n \in \N}$ simply converges to some $f^+ \in L^\infty([0,T])$. As for $n \ge 1$
$$ | K(t-t') P( g_n(t')) | \le K(t-t') \max( | P(0) |, | P( \max_{[0, T]} g_1 ) | ) \in L^1 ([0, t]),$$
and $P$ is continous, we deduce from Lebesgue's Theorem that for all $t \in [0,T]$, $S[a, g_n ](t) $ converges to 
$S[a, f^+](t) $.  The inequalities $g_{n+1} \le g_n$ yields that $f^+$ is a solution of (\ref{eq:equation_volterra}), hence $f^+$ is continuous and 
by Dini's Theorem, the convergence of the sequence $(g_n)_{n \in \N}$ is uniform. We also notice that $f^+ \le g_n$ for all $n$. 
\end{proof}

\subsection{Uniqueness} 

The solution to (\ref{eq:equation_volterra}) may be not unique. For instance, take $K=1$ and $P(z) = \sqrt z$, $a=0$. Therefore (\ref{eq:equation_volterra}) becomes the differential equation $f' = \sqrt f$, $f(0) = 0$, whose solutions are $f(t) = 0$ and $f(t) = t^2 /4$. However, when $P$ is Lipchitz, uniqueness occurs in some sense, which is the aim of this section. 

Assume that (\ref{eq:equation_volterra}) has a subsolution $f$ and a supersolution $g$ that verify $f \le g$, both being continuous. 
Arguing as in lemma \ref{lem:supersol},  we see in this case that the sequence $(f_n)_{n \in \N}$ defined by $f_0 = f$, $f_{n+1} = S[a, f_n]$, uniformly converges  
to a solution $f^-$ of (\ref{eq:equation_volterra}), that also satisfies $f^- \le f^+$. 
\begin{Remark} As $0$ is a subsolution, according to Lemma \ref{lem:supersol}, we have shown that there exists $\tau>0$ such that 
the nonlinear Volterra equation (\ref{eq:equation_volterra}) has a solution over $[0,\tau]$ 
\end{Remark}
Our uniqueness result is phrased as follows. 
\BL \label{eq:uniqueness} Assume that $P$ is a non increasing Lipschitz continuous function, $K \in L^1([0,T])$. Then $f^+ = f^-$ over $[0,T]$. \EL
\begin{proof} Let $L$ denotes the Lipchitz constant of $P$. Then we have
\BEQ \label{eq:uniqueness1} \forall \, t \in [0,T], \quad 0 \le f^+(t) - f^-(t) \le L \int_0^t K(t-t') ( f^+(t') - f^-(t') ) dt'.\EEQ 
We first assume that $K$ is bounded by a constant $M$. Therefore,  (\ref{eq:uniqueness1}) yields 
\BEQ \label{eq:uniqueness2} \forall \, t \in [0,T], \quad 0 \le f^+(t) - f^-(t) \le L M \int_0^t ( f^+(t') - f^-(t') ) dt',\EEQ 
from which we easely deduce 
\BEQ \label{eq:uniqueness3} \forall \, t \in [0,T], \, \forall \, m \ge 2, \quad 0 \le f^+(t) - f^-(t) \le {  (L M  t)^m \over {m!} } \sup_{t' \in [0,T]} ( f^+(t') - f^-(t') ),\EEQ 
hence $f^+ = f^-$. For $K \in L^1([0,T])$, we rephrase  (\ref{eq:uniqueness1}) as 
$$ 0 \le \E\le \Phi (\E),$$
by writting $\E = f^+ - f^-$, and 
$$ \Phi (u) (t) = L \int_0^t K(t-t') u(t') dt'. $$
The operator $\Phi$ is a linear operator, the kernel of which is equal to $\widetilde K (t) = K(t) \hbox{1} \! \hbox{I} _{[0,t]}$. The kernel of the operator 
$\Phi^2$ is equal to $\widetilde K \star \widetilde K$, which is continuous, then bounded on the compact $[0,T]$, which yields a similar inequality as 
(\ref{eq:uniqueness3}) and the conclusion $f^+ = f^-$. 
\end{proof} 
\begin{Remark}\label{rem:linear}  With the assumptions of  Lemma \ref{eq:uniqueness}, When $P$ is linear, that is $P(f) = \alpha_1 + \alpha_2 f$ ($\alpha_i \ge 0$), it easy checked that the solution of (\ref{eq:equation_volterra}) can be extended over $[0, \infty[$. In this case, (\ref{eq:equation_volterra})  is referred to as linear Volterra equation. 
\end{Remark} 
\subsection{V-maximum principle} \label{app:section_3} 
The aim of this section is to prove the following result. We still assume $K \in L^1([0,T])$ and that $P$ is a non increasing  Lipchitz-continuous function. 
\BL \label{lem:max_princ}  Let $f$ be a subsolution of  (\ref{eq:equation_volterra}) and $g$ a supersolution of (\ref{eq:equation_volterra}) over $[0,T]$. Then 
\BEQ  \label{eq:B1}  \forall \, t \in [0,T], \quad f(t) \le g(t). \EEQ 
\EL
\begin{proof} 
By considering $S[a, f]$ instead of $f$, we can assume that $f$ is continuous without loss of generality. Similarly, by considering the sequence $(g_n)_{n \in \N}$ as 
in Lemma \ref{lem:supersol}, we can assume that $g$ is a solution of  (\ref{eq:equation_volterra}) instead being a supersolution. 

Assume that (\ref{eq:B1})  do not hold, and let 
\BEQ  \tau = \sup \{ t \in [0,T[, \, \hbox{s.t. }  \, \forall \, t' \in [0,t],  \, f(t') \le g(t') \} \EEQ
Our assumption yiels $\tau <T$ and there exists a sequence $(t_n)_{n \in N}$ that converges to $\tau$, such that 
$t_n >\tau$ for each $n$ and $f(t_n) > g(t_n)$. We may have $\tau=0$. 

Given $\eta>0$, let $k=k(t)$ be the function defined by 
\BEQ k(t) = \left \{ \begin{array}{l} g(t), \quad t \in [0, \tau], \\
g(t) + \eta, \quad t \in \,  ]\tau, T]. \end{array} \right. \EEQ
We claim that there exists $S > \tau$ such that $k$ is a supersolution of (\ref{eq:equation_volterra}) over $[0, S]$ and 
$f \le k$ over $[0, S]$. Indeed, as $P$ is Lipchitz continuous and non increasing, 
\BEQ \forall \, t' \in [\tau, T], \quad 0 \le P(k(t')) - P(g(t')) \le L \eta. \EEQ
Therefore, since $g$ is a solution of (\ref{eq:equation_volterra}),  $k$ is 
a supersolution of (\ref{eq:equation_volterra}) over $[0,S]$ for all $S > \tau$  that satisfy
\BEQ  \label{eq:B5} \forall \, t \in [\tau, S], \quad  L \eta \int_\tau^t K(t-t') dt' \le \eta. \EEQ
As $K \in L^1([0, T])$ there exists $S_0 > \tau$ such that for all $ S \in \,  ]\tau,  S_0]$, (\ref{eq:B5}) holds. Furthermore, as $f$ is continuous, 
there exists $S \in \,  ] \tau, S_0]$ such that 
$$ \forall \, t \in [\tau, S], \quad f(t) \le k(t).$$
We consider the sequences, over $[0, S]$
$$ f_0 = f, \quad f_{n+1} = S[a, f_n], \quad k_0 = k, \quad k_{n+1} = S[a, k_n]. $$
we have over $[0,S]$ and for all $n$, 
$$ f \le f_n \le k_n \le k.$$
According to Lemma \ref{lem:supersol}, $(k_n)_{n \in \N}$ and $(f_n)_{n \in \N}$ converge to a solution  of 
(\ref{eq:equation_volterra}) over $[0,S]$ which is above $f$ over $[0,T]$. By the uniqueness result of 
Lemma \ref{eq:uniqueness}, this solution is the restriction of $h$ to $[0, S]$, which contradicts the definition of $\tau$ and concludes the proof.   
\end{proof}

\end{appendices}

\bibliographystyle{plain}
\bibliography{Leray_Turb_V4}

\begin{thebibliography}{10}

\bibitem{HA12}
H.~Ali.
\newblock Ladder theorem and length-scale estimates for a {L}eray alpha model
  of turbulence.
\newblock {\em Commun. Math. Sci.}, 10(2):477--491, 2012.

\bibitem{AH13}
H.~Ali.
\newblock On a critical {L}eray-{$\alpha$} model of turbulence.
\newblock {\em Nonlinear Anal. Real World Appl.}, 14(3):1563--1584, 2013.

\bibitem{JMB89}
J.~M. Ball.
\newblock A version of the fundamental theorem for {Y}oung measures.
\newblock In {\em P{DE}s and continuum models of phase transitions ({N}ice,
  1988)}, volume 344 of {\em Lecture Notes in Phys.}, pages 207--215. Springer,
  Berlin, 1989.

\bibitem{BFR80}
J.~Bardina, J.~H. Ferziger, and W.~C. Reynolds.
\newblock Improved subgrid scale models for large eddy simulation.
\newblock {\em AIAA paper}, 80:80--1357, 1980.

\bibitem{BL17}
L.~Berselli and R.~Lewandowski.
\newblock On the bardina's model in the whole space.
\newblock {\em Work in progress}, 2017.

\bibitem{BIL06}
L.~C. Berselli, T.~Iliescu, and W.~J. Layton.
\newblock {\em Mathematics of large eddy simulation of turbulent flows}.
\newblock Scientific Computation. Springer-Verlag, Berlin, 2006.

\bibitem{brezis}
H.~Brezis.
\newblock {\em Analyse fonctionnelle}.
\newblock Collection Math\'ematiques Appliqu\'ees pour la Ma\^\i trise.
  [Collection of Applied Mathematics for the Master's Degree]. Masson, Paris,
  1983.
\newblock Th{\'e}orie et applications. [Theory and applications].

\bibitem{CKN1982}
L.~Caffarelli, R.~Kohn, and L.~Nirenberg.
\newblock Partial regularity of suitable weak solutions of the
  {N}avier-{S}tokes equations.
\newblock {\em Comm. Pure Appl. Math.}, 35(6):771--831, 1982.

\bibitem{CM95}
M.~Cannone and Y.~Meyer.
\newblock Littlewood-{P}aley decomposition and {N}avier-{S}tokes equations.
\newblock {\em Methods Appl. Anal.}, 2(3):307--319, 1995.

\bibitem{CLT06}
Y.~Cao, E.~M. Lunasin, and E.S. Titi.
\newblock Global well-posedness of the three-dimensional viscous and inviscid
  simplified {B}ardina turbulence models.
\newblock {\em Commun. Math. Sci.}, 4(4):823--848, 2006.

\bibitem{CL14}
T.~Chac{\'o}n-Rebollo and R.~Lewandowski.
\newblock {\em Mathematical and numerical foundations of turbulence models and
  applications}.
\newblock Modeling and Simulation in Science, Engineering and Technology.
  Birkh\"auser/Springer, New York, 2014.

\bibitem{JYC11}
J.-Y Chemin.
\newblock About weak-strong uniqueness for the 3{D} incompressible
  {N}avier-{S}tokes system.
\newblock {\em Comm. Pure Appl. Math.}, 64(12):1587--1598, 2011.

\bibitem{CG09}
J.-Y Chemin and I.~Gallagher.
\newblock Wellposedness and stability results for the {N}avier-{S}tokes
  equations in {${\bf R}^3$}.
\newblock {\em Ann. Inst. H. Poincar\'e Anal. Non Lin\'eaire}, 26(2):599--624,
  2009.

\bibitem{CHOTI05}
A.~Cheskidov, D.-D. Holm, E.~Olson, and E.-S. Titi.
\newblock On a {L}eray-{$\alpha$} model of turbulence.
\newblock {\em Proc. R. Soc. Lond. Ser. A Math. Phys. Eng. Sci.},
  461(2055):629--649, 2005.

\bibitem{DM87}
R.-J. DiPerna and A.-J. Majda.
\newblock Oscillations and concentrations in weak solutions of the
  incompressible fluid equations.
\newblock {\em Comm. Math. Phys.}, 108(4):667--689, 1987.

\bibitem{FHT01}
D.-D. Foias, C.and~Holm and E.-S. Titi.
\newblock The {N}avier-{S}tokes-alpha model of fluid turbulence.
\newblock {\em Phys. D}, 152/153:505--519, 2001.
\newblock Advances in nonlinear mathematics and science.

\bibitem{FK64}
H.~Fujita and T.~Kato.
\newblock On the {N}avier-{S}tokes initial value problem.
\newblock {\em Arch. Rational Mech. Anal.}, 16:269--315, 1964.

\bibitem{GG00}
G.-P. Galdi.
\newblock An introduction to the {N}avier-{S}tokes initial-boundary value
  problem.
\newblock In {\em Fundamental directions in mathematical fluid mechanics}, Adv.
  Math. Fluid Mech., pages 1--70. Birkh\"auser, Basel, 2000.

\bibitem{GH06}
B.-J. Geurts and D.-D. Holm.
\newblock Leray and {LANS}-{$\alpha$} modelling of turbulent mixing.
\newblock {\em J. Turbul.}, 7:Paper 10, 33, 2006.

\bibitem{GHJD06}
J.~D. Gibbon and D.~D. Holm.
\newblock Length-scale estimates for the {LANS}-{$\alpha$} equations in terms
  of the {R}eynolds number.
\newblock {\em Phys. D}, 220(1):69--78, 2006.

\bibitem{GH08}
J.~D. Gibbon and D.~D. Holm.
\newblock Estimates for the {LANS}-{$\alpha$}, {L}eray-{$\alpha$} and {B}ardina
  models in terms of a {N}avier-{S}tokes {R}eynolds number.
\newblock {\em Indiana Univ. Math. J.}, 57(6):2761--2773, 2008.

\bibitem{ILT06}
A.-A. Ilyin, E.-M. Lunasin, and E.-S. Titi.
\newblock A modified-{L}eray-{$\alpha$} subgrid scale model of turbulence.
\newblock {\em Nonlinearity}, 19(4):879--897, 2006.

\bibitem{TK72}
T.~Kato.
\newblock Nonstationary flows of viscous and ideal fluids in {${\bf R}^{3}$}.
\newblock {\em J. Functional Analysis}, 9:296--305, 1972.

\bibitem{LL03}
W.~Layton and R.~Lewandowski.
\newblock A simple and stable scale-similarity model for large eddy simulation:
  energy balance and existence of weak solutions.
\newblock {\em Appl. Math. Lett.}, 16(8):1205--1209, 2003.

\bibitem{LL06}
W.~Layton and R.~Lewandowski.
\newblock On a well-posed turbulence model.
\newblock {\em Discrete Contin. Dyn. Syst. Ser. B}, 6(1):111--128, 2006.

\bibitem{LR12}
W.-J. Layton and L.-G. Rebholz.
\newblock {\em Approximate deconvolution models of turbulence}, volume 2042 of
  {\em Lecture Notes in Mathematics}.
\newblock Springer, Heidelberg, 2012.
\newblock Analysis, phenomenology and numerical analysis.

\bibitem{Ler1934}
J.~Leray.
\newblock Sur le mouvement d'un liquide visqueux emplissant l'espace.
\newblock {\em Acta Math.}, 63(1):193--248, 1934.

\bibitem{RL97}
R.~Lewandowski.
\newblock The mathematical analysis of the coupling of a turbulent kinetic
  energy equation to the {N}avier-{S}tokes equation with an eddy viscosity.
\newblock {\em Nonlinear Anal.}, 28(2):393--417, 1997.

\bibitem{PLL84}
P.-L. Lions.
\newblock The concentration-compactness principle in the calculus of
  variations. {T}he locally compact case. {I}.
\newblock {\em Ann. Inst. H. Poincar\'e Anal. Non Lin\'eaire}, 1(2):109--145,
  1984.

\bibitem{LM01}
P.-L. Lions and N.~Masmoudi.
\newblock Uniqueness of mild solutions of the {N}avier-{S}tokes system in
  {$L^N$}.
\newblock {\em Comm. Partial Differential Equations}, 26(11-12):2211--2226,
  2001.

\bibitem{MB02}
A.-J. Majda and A.-L. Bertozzi.
\newblock {\em Vorticity and incompressible flow}, volume~27 of {\em Cambridge
  Texts in Applied Mathematics}.
\newblock Cambridge University Press, Cambridge, 2002.

\bibitem{LN59}
L.~Nirenberg.
\newblock On elliptic partial differential equations.
\newblock {\em Ann. Scuola Norm. Sup. Pisa (3)}, 13:115--162, 1959.

\bibitem{CO11}
C.~W. Oseen.
\newblock Sur les formules de green g\'en\'eralis\'ees qui se pr\'esentent dans
  l'hydrodynamique et sur quelquesunes de leurs applications.
\newblock {\em Acta Math.}, 34(1):205--284, 1911.

\bibitem{CWO27}
C.-W. Oseen.
\newblock {\em Neuere Methoden und Ergebnisse in der Hydrodynamik}.
\newblock Leipzig : Akademische Verlagsgesellschaft m. b. h., 1927.

\bibitem{SP00}
S.-B. Pope.
\newblock {\em Turbulent flows}.
\newblock Cambridge University Press, Cambridge, 2000.

\bibitem{LR08}
L.-G. Rebholz.
\newblock A family of new, high order {NS}-{$\alpha$} models arising from
  helicity correction in {L}eray turbulence models.
\newblock {\em J. Math. Anal. Appl.}, 342(1):246--254, 2008.

\bibitem{JS63}
J.~Smagorinsky.
\newblock General circulation experiments with the primitive equations.
\newblock {\em Monthly Weather Review}, 91:99 --164, 1963.

\bibitem{ES70}
E.-M. Stein.
\newblock {\em Singular integrals and differentiability properties of
  functions}.
\newblock Princeton Mathematical Series, No. 30. Princeton University Press,
  Princeton, N.J., 1970.

\bibitem{LT79}
L.~Tartar.
\newblock Compensated compactness and applications to partial differential
  equations.
\newblock In {\em Nonlinear analysis and mechanics: {H}eriot-{W}att
  {S}ymposium, {V}ol. {IV}}, volume~39 of {\em Res. Notes in Math.}, pages
  136--212. Pitman, Boston, Mass.-London, 1979.

\bibitem{LT85}
L.~Tartar.
\newblock Remarks on oscillations and {S}tokes' equation.
\newblock In {\em Macroscopic modelling of turbulent flows ({N}ice, 1984)},
  volume 230 of {\em Lecture Notes in Phys.}, pages 24--31. Springer, Berlin,
  1985.

\bibitem{LT90}
L.~Tartar.
\newblock {$H$}-measures, a new approach for studying homogenisation,
  oscillations and concentration effects in partial differential equations.
\newblock {\em Proc. Roy. Soc. Edinburgh Sect. A}, 115(3-4):193--230, 1990.

\bibitem{RT01}
R.~Temam.
\newblock {\em Navier-{S}tokes equations}.
\newblock AMS Chelsea Publishing, Providence, RI, 2001.
\newblock Theory and numerical analysis, Reprint of the 1984 edition.

\bibitem{LCY69}
L.~C. Young.
\newblock {\em Lectures on the calculus of variations and optimal control
  theory}.
\newblock Foreword by Wendell H. Fleming. W. B. Saunders Co.,
  Philadelphia-London-Toronto, Ont., 1969.

\end{thebibliography}

\end{document}